\documentclass{article}
\usepackage[preprint, final]{neurips_2026}
\usepackage{graphicx} 
\usepackage{xcolor}
\usepackage[colorlinks=true, citecolor=blue, linkcolor=black, urlcolor=black]{hyperref}
\usepackage{amsmath, amsthm, amssymb}
\usepackage{stmaryrd}
\usepackage{hyperref}
\usepackage{enumitem}
\usepackage{comment}
\usepackage{algorithm}
\usepackage{algpseudocode}
\usepackage{wrapfig}

\newcommand{\R}{\mathbb{R}}
\newcommand{\real}{\R}

\newcommand{\trasp}[1]{{#1}^{\!\top}}

\newcommand{\mcN}{\mathcal{N}}

\newcommand{\norm}[1]{\left\|#1\right\|}

\newcommand{\relu}{\mathrm{ReLU}}

\newcommand{\until}[1]{\{1,\dots, #1\}}

\newcommand{\setdef}[2]{\{#1 \; | \; #2\}}

\newcommand{\map}[3]{#1: #2 \rightarrow #3}

\newtheorem{theorem}{Theorem}
\newtheorem{lemma}[theorem]{Lemma}
\newtheorem{proposition}[theorem]{Proposition}

\newcommand{\jac}[1]{J_{#1}}
\newcommand{\distrib}{\mathcal{D}}
\newcommand{\Aref}[1]{\hyperref[assm:A#1]{\textup{(A#1)}}}
\newcommand{\Bref}[1]{\hyperref[assm:B#1]{\textup{(B#1)}}} 

\title{Constrained Flow Matching \\ via Lagrangian Dual Flows}
\date{April 2026}
\author{Vince Kurtz \\ DePaul University \\ \texttt{vkurtz1@depaul.edu}
\And Alexander Davydov \\ Rice University \\ \texttt{davydov@rice.edu}}

\begin{document}

\maketitle

\begin{abstract}
    Flow matching is a powerful tool for generative modeling, but emerging applications in robotics, planning, and physics require inference-time constraints on generated outputs. Such constraints are often complex and highly nonlinear. As a result, methods designed for linear constraints like image inpainting are rarely sufficient, and projection or optimization-based alternatives can be prohibitively expensive. In this paper, we introduce Lagrangian Dual Flows, a new family of constrained generation techniques based on Lagrangian dual dynamics. By simply flowing a dual co-state alongside generated samples, we can guarantee nonlinear constraint satisfaction without expensive optimization subproblems, pseudoinverses, or projection steps during the denoising process. The resulting constrained generation algorithms are simple, effective, and open new theoretical connections between flow matching and primal-dual methods in numerical optimization.
\end{abstract}

\section{Introduction}

Flow matching~\cite{lipman2023flow} has emerged as an effective and widely-used approach for generative modeling, with applications in image and video generation \cite{lipman2023flow, ho2022video, jin2025pyramidal}, robotics \cite{black2024pi_0, zhang2025flowpolicy, kurtz2025generative}, physics \cite{utkarsh2025pcfm,baldan2026physics}, and beyond. Flow matching learns a vector field that is integrated at inference time to transport samples from a simple reference distribution (e.g., standard Gaussian) to an approximation of the data distribution. 

However, many generative modeling applications require the generated output to satisfy constraints. In robotics, for example, generated actions should avoid collisions while generated trajectories must satisfy dynamic feasibility constraints. In physics-informed machine learning, generated solutions should respect underlying conservation laws. 
In these and many other emerging application areas, constraints are critical but (i) are only known at inference time and (ii) are often nonlinear. To meet these needs, we aim to take a pre-trained flow matching model and modify the generation procedure such that the generated samples satisfy these constraints. 

Existing methods for applying inference-time constraints to a pre-trained flow-matching model either (i) require simple linear constraints~\cite{song2023pigdm,pokle2024training}, or (ii) solve a computationally expensive subproblem at each integration step in the denoising process~\cite{li2025hardflow,utkarsh2025pcfm}. Linear constraints are a special case of linear inverse problems, a well-studied problem class that includes image inpainting and super-resolution, but these techniques are not directly applicable to general nonlinear constraints. These and related methods often rely on computing a pseudoinverse of the constraint, which can be difficult to compute or ill-defined. Methods that support nonlinear constraints require projection to the constraint manifold~\cite{utkarsh2025pcfm} or the solution of optimization subproblems in each inference step \cite{li2025hardflow}, both of which are difficult to implement and can be computationally demanding.

Note that while linear methods are typically derived as approximate Bayesian posterior sampling, steering samples toward a conditional distribution given the constraints \cite{song2023pigdm}, nonlinear constrained methods typically focus only on constraint satisfaction. We follow the latter group, and ask only that generated samples be \emph{feasible} with respect to the constraints.

To this end, we present a family of efficient methods for constraint satisfaction in flow matching which we refer to as \emph{Lagrangian Dual Flows} (LDF). Inspired by the classical view of constrained optimization as a continuous-time dynamical system~\cite{arrow1958studies,platt1987constrained}, we augment the state of the flow matching model with a dual variable, which plays the role of a Lagrange multiplier. The dual dynamics are a simple ordinary differential equation (ODE) defined by the constraint: the same off-the shelf ODE integration tools typically used for flow matching can thus generate samples satisfying constraints. Our method can handle both nonlinear equality and inequality constraints and does not require any pseudoinverse or computationally expensive optimization sub-problems. 

Beyond these structural advantages, LDF comes with provable guarantees. We show that the constraint violation is driven to zero at an explicit rate $O\big((1-t)^\alpha\big)$ as $t \to 1^-$ for an explicit constant $\alpha > 0$. 
This result holds under standard regularity and constraint-qualification assumptions, together with a small-variation condition requiring that the Gram matrix associated with the constraint Jacobian stay sufficiently close to some fixed reference matrix along the trajectory, which holds automatically for affine constraints.
We emphasize that this is a guarantee of \emph{feasibility} of the generated sample, not of convergence to any particular conditional distribution.

In summary, our contributions are as follows:
\begin{itemize}
  \item We introduce \emph{Lagrangian Dual Flows}, a family of methods for enforcing inference-time
  constraints on a pre-trained flow matching model by
  augmenting the generative ODE with dual (and, for inequality constraints, slack)
  variables. Sampling requires only the same off-the-shelf ODE solvers used for
  unconstrained flow matching, enabling simple and computationally efficient implementations.

  \item Our proposed approach supports general nonlinear equality and inequality
  constraints known only at inference time. Inequalities are handled via a
  globally Lipschitz slack reformulation whose dynamics keep the slack
  nonnegative automatically (Lemma~\ref{lem:posinv}).

  \item We prove that the constraint violation converges to zero as $t \to 1^-$
  with an explicit algebraic rate (Theorems~\ref{thm:p2}
  and~\ref{thm:ineq}).

  \item We demonstrate the effectiveness of our approach on nonlinear equality constraint,
  inequality constraint, and image inpainting examples, showing tighter constraint
  satisfaction than pseudoinverse and penalty-based baselines, with substantially lower
  integration cost than optimization/projection-based methods.
\end{itemize}

The remainder of this paper is organized as follows: Section~\ref{sec:related_work} reviews existing methods for constrained flow matching, while Section~\ref{sec:background} presents background and a formal problem setting. Section~\ref{sec:ldf} presents our main results, including algorithms for equality and inequality-constrained generation and theoretical convergence analysis. We illustrate the effectiveness of LDF with several examples in Section~\ref{sec:numerical_results}, provide a detailed discussion of limitations and future work in Section~\ref{sec:disucssion}, and provide conclusions in Section~\ref{sec:conclusion}.

\section{Related Work}\label{sec:related_work}

Given the importance of constraints in many applications, inference-time constrained flow matching is an area of rapid research and development. Here we briefly review key methods in this area.

\paragraph{Penalty and guidance with quadratic costs.}
A large family of training-free methods steers a pre-trained diffusion or flow matching model toward a constraint by adding the gradient of a quadratic constraint-violation penalty to the ODE dynamics. Reconstruction guidance in video diffusion~\cite{ho2022video}, manifold-constrained gradients~\cite{chung2022mcg}, and pyramidal denoising~\cite{ryu2022pyramidal} are all examples of this approach. The guidance of flow matching has received a more comprehensive treatment in~\cite{feng2025guidance}. Such corrections are inherently soft; since the guidance term needs to balance the pre-trained drift, constraint satisfaction is only ever approximate. Tighter constraint satisfaction requires a larger penalty, which in turn makes the denoising ODE stiffer and more difficult to integrate. In our proposed approach, we include a penalty term that plays the same role, but it is augmented by a flowed dual variable that drives the residual to zero exactly while limiting numerical ill-conditioning.

\paragraph{Pseudoinverse and posterior sampling guidance.} A second family of methods for constraint satisfaction approximates the conditional score through a Gaussian posterior. This approach yields corrections built from the pseudoinverse of the constraint Jacobian. Pseudoinverse-guided diffusion ($\Pi$GDM)~\cite{song2023pigdm} enforces linear measurements in this way. Subsequent work refines the posterior covariance~\cite{peng2024improving} and adapts the approach to flow models for linear inverses~\cite{pokle2024training,pourya2025flower} and diffusion posterior sampling~\cite{chung2023dps}. In robotics, real-time action-chunking policies use pseudoinverse guidance to constrain initial generated actions~\cite{black2026real}. These methods are designed for Bayesian posterior sampling under (approximately) linear-Gaussian measurement models, and the per-step pseudoinverse cost scales with the number of constraints. Our approach includes a dual correction term that recovers a similar structure when specialized to linear constraints (see Section~\ref{sec:intuition}), but we replace the explicit pseudoinverse with a dual variable with simple ODE dynamics.

\paragraph{Projection and reflection-based methods.}
A third family of methods enforces constraints geometrically, by projecting intermediate or terminal iterates onto the feasible set or by reflecting the dynamics at the boundary of this set. Gradient-free hard-constrained sampling~\cite{cheng2024eci} alternates projection-style correction steps with the generative update, while reflected Schr\"odinger bridges~\cite{deng2024reflectedsb} and reflected flow matching~\cite{xie2024reflected} restrict trajectories to a domain by reflect samples at the constraint boundary. Physics-constrained flow matching (PCFM) projects each step via a full forward-backward ODE solve~\cite{utkarsh2025pcfm}. More recently, PolyFlow~\cite{ma2026polyflow} embeds polyhedral constraints
directly into the model architecture, using a ray-shooting operation to keep
each update within the feasible set by construction and thereby
avoiding projection or optimization solves at inference time. These approaches are effective for simple or convex constraints where projection is computationally cheap, but exact projection onto a general nonlinear constraint set is itself an optimization problem, and hard projection inside an ODE yields a discontinuous, projected dynamical system~\cite{nagurney2012projected}. Our dual/slack formulation avoids these complexities: the soft-projected dynamics in Section~\ref{sec:inequality} keep the dynamics globally Lipschitz and preserve the non-negativity of slack variables automatically. 

\paragraph{Control-theoretic methods.}
A fourth family of methods borrows machinery from trajectory optimization, model predictive control, and control barrier functions (CBFs). HardFlow enforces hard constraints through non-convex trajectory optimization~\cite{li2025hardflow}. Certified motion planners combine flow matching with CBF or quadratic-program-based safety filters~\cite{xiao2023safediffuser,safeflow2025,yang2025uniconflow}. These methods ensure tight constraint satisfaction but solve an optimization subproblem at every integration step, which dominates their cost. Additionally, proper use of CBFs requires finding a Lyapunov-like barrier function compatible with both the constraint and the system dynamics \cite{ames2019control}. This can pose a considerable challenge when the dynamics are defined by a large pre-trained flow matching model.

\paragraph{ODE-based optimization.} 
Finally, our proposed approach draws on the classical view of optimization algorithms as dynamical systems. The basic differential multiplier method of Platt and Barr~\cite{platt1987constrained} evolves primal and dual variables jointly along an ODE whose equilibria are the constrained stationary points of this equality-constrained optimization problem. Continuous-time saddle-point and primal-dual flows remain a useful tool for analyzing and designing optimization algorithms~\cite{arrow1958studies,feijer2010stability, cherukuri2017saddle,qu2019exponential}. LDF leverages these idea for generative modeling: rather than running a primal-dual flow to convergence as a separate optimization procedure for constraining the generated sample, we couple the dual dynamics to the generative ODE and integrate both with the same off-the-shelf solver so that constraint satisfaction is achieved at the conclusion of the generation process.

\section{Background}\label{sec:background}

\textit{Generative modeling and flow matching.} The goal of generative modeling is to draw new samples from an unknown data distribution $\distrib$ on $\R^n$ given access only to a finite collection of samples $x^{(i)} \sim \distrib$, $i \in \{1,\dots,N\}$. Flow matching~\cite{lipman2023flow} approaches this challenge by learning a time-varying vector field $\map{v_\theta}{\R^n \times [0,1]}{\R^n}$ that transports a simple reference distribution into $\distrib$. Typically, a sample is generated by drawing an initial sample $x_0$ from a standard normal distribution and integrating the ordinary differential equation
\begin{equation}\label{eq:flow}
    \dot{x}_t = v_\theta(x_t,t)
\end{equation}
from $t = 0$ to $t = 1$. The endpoint $x_1$ is then a sample approximately distributed according to $\distrib$. The vector field $v_\theta$ is trained so that its flow matches a prescribed probability path connecting the Gaussian reference to the data. We treat $v_\theta$ as a fixed pre-trained model and refer the reader to~\cite{lipman2023flow} for details on how to train a flow matching model (the subscript $\theta$ in $v_\theta$ denotes trainable parameters). A key feature of flow matching is that sampling requires the integration of~\eqref{eq:flow}, which can be performed with standard off-the-shelf ODE solvers.

\textit{Constraints.} In many applications, the generated sample $x_1$ is required to satisfy constraints. We consider equality and inequality constraints,
\begin{equation}\label{eq:constraints}
    g(x_1) = 0, \qquad h(x_1) \leq 0,
\end{equation}
where $\map{g}{\R^n}{\R^m}$ and $\map{h}{\real^n}{\real^{k}}$. We understand inequality constraints componentwise, and the constraints are permitted to be nonlinear.

Crucially, we assume that $g$ and $h$ are only known at inference time. A robot may encounter a new obstacle, an inpainting mask may be specified by a user, or a physical conservation law may be imposed only at deployment. The vector field $v_\theta$ is thus generally trained without knowledge of these constraints and $\distrib$ may be supported outside of the feasible set defined by~\eqref{eq:constraints}. As a result, an unconstrained sample $x_1$ may violate the constraints~\eqref{eq:constraints}.

\textit{Objective.} Our goal is to take a pre-trained $v_\theta$ and, at inference time, modify the sampling process such that the generated sample satisfies the constraints. We seek a modification that satisfies three properties. First, it should certify constraint satisfaction: the generated $x_1$ should provably satisfy~\eqref{eq:constraints} at least up to small numerical tolerances. Second, it should accommodate nonlinear $g$ and $h$. Third, it should remain computationally lightweight, ideally requiring no more than the integration of an augmented ODE with the same off-the-shelf solvers used for unconstrained sampling. In particular, we would like to avoid solving optimization subproblems (e.g., projection onto the constraint set) or expensive nonlinear pseudoinverse computations at each integration step.

\section{Lagrangian Dual Flows}\label{sec:ldf}

In this section we introduce LDF, our family of methods for enforcing constraints on a pre-trained flow matching model at inference time. We begin by building intuition from two classical approaches to constrained problems. We then turn to theoretical analysis of this approach.

\subsection{Intuition}\label{sec:intuition}
Before formally presenting our approach, we motivate its form by examining two classical approaches to enforcing an equality constraint $g(x)=0$ on the output of a flow~\eqref{eq:flow}. Both approaches admit natural continuous-time formulations within the flow matching framework, and each has a well-known shortcoming that our proposed approach addresses.

\textit{Penalty method.} A simple way to drive the sample toward the constraint set is to add the negative gradient of a quadratic penalty $\frac{c}{2}\|g(x)\|^2$ to the flow:
\begin{equation}
\dot x_t = v_\theta(x_t, t) - c\,\trasp{\jac{g}(x_t)}\,g(x_t),
\label{eq:penalty}
\end{equation}
where $c > 0$ is a constant and $\jac{g}(x) = \nabla_x g(x)$ is the Jacobian of $g$. For large
$c$, the correction term dominates whenever
$g(x)$ is non-negligible, pulling the trajectory toward the constraint manifold $\setdef{x}{g(x)=0}$. However, exact constraint satisfaction requires $c \to \infty$: for any finite $c$, a residual proportional to $1/c$ persists at $t = 1$, since the penalty must balance the pre-trained drift. Additionally, the ODE becomes increasingly stiff as $c$ grows. Numerical integration then either requires very small step sizes or implicit solvers, both of which substantially increase the cost of sampling.

\textit{Tangential projection.} A more principled approach is to require that the corrected flow remain tangent to the constraint manifold, so that trajectories never move away from $g(x) = 0$ and do not drift off the constraint set after reaching it. To this end, consider augmenting the unconstrained flow with a correction of the form $-\trasp{\jac{g}(x)}\,\mu$, for some coefficient $\mu \in \R^m$ to be determined:
\begin{equation}
    \dot x_t = v_\theta(x_t, t) - \trasp{\jac{g}(x_t)}\,\mu.
    \label{eq:tangent-flow}
\end{equation}
Differentiating the constraint along this flow gives
\begin{equation}
    \frac{d}{dt}\,g(x_t) = \jac{g}(x_t)\,v_\theta(x_t, t) - \jac{g}(x_t)\,\trasp{\jac{g}(x_t)}\,\mu.
    \label{eq:gdot}
\end{equation}
To ensure flows are tangent to the constraint, we set $\frac{d}{dt}\,g(x_t) = 0$ and choose
\begin{equation}
    \mu(x, t) = \big(\jac{g}(x)\,\trasp{\jac{g}(x)}\big)^{-1}\jac{g}(x)\,v_\theta(x, t).
    \label{eq:analytical-lambda}
\end{equation}
Substituting~\eqref{eq:analytical-lambda} back into~\eqref{eq:tangent-flow} and combining with a penalty term like~\eqref{eq:penalty} yields
\begin{equation}\label{eq:projected-flow}
    \dot x_t = \big(I - \jac{g}(x_t)^\dagger \,\jac{g}(x_t)\big)\,v_\theta(x_t, t) - c\trasp{\jac{g}(x_t)} g(x_t),
\end{equation}
where $\jac{g}(x)^\dagger = \trasp{\jac{g}(x)}\,(\jac{g}(x)\,\trasp{\jac{g}(x)})^{-1}$ is the Moore-Penrose pseudoinverse of $\jac{g}(x)$ and the penalty term ensures that $g$ converges to zero. Note that we can choose a much smaller $c > 0$ in this case, as the multiplier $\mu(x, t)$ eliminates movement away from the constraint manifold.

The cost of this approach is dominated by~\eqref{eq:analytical-lambda}: every evaluation of the right-hand side requires solving an $m \times m$ linear system with the matrix $\jac{g}\,\trasp{\jac{g}}$. For applications with many constraints, e.g., long action sequences in robotics or high-dimensional inpainting masks, this cost can become prohibitive, especially when $\jac{g}\,\trasp{\jac{g}}$ is ill-conditioned. To avoid this, our proposed approach approximates the Lagrage multiplier $\mu(x, t)$ without computing \eqref{eq:analytical-lambda} explicitly.

\subsection{The Proposed Method}\label{sec:method-dynamics}
Let $\map{g}{\real^n}{\real^m}$ be the equality constraint in~\eqref{eq:constraints} and let $\jac{g}(x) \in \real^{m \times n}$ denote its Jacobian matrix evaluated at $x \in \real^n$. Inspired by constrained differential optimization~\cite{platt1987constrained}, we augment the flow matching state $x$ with a dual variable $\lambda \in \real^m$, and evolve the pair $(x,\lambda)$ jointly. For $t \in [0,1)$ and fixed constants $c > 0, p \in [1,\infty)$, the \emph{Lagrangian dual flow} is the system
\begin{subequations}\label{eq:ldf}
\begin{align}
\dot x_t &= v_\theta(x_t, t) - \trasp{ \jac{g}(x_t)}\,\lambda_t - c\,\trasp{\jac{g}(x_t)}\,g(x_t), \label{eq:x}\\
\dot \lambda_t &= \frac{g(x_t)}{(1-t)^p}, \label{eq:lambda}
\end{align}
\end{subequations}
with initial conditions $x_0 \sim \mcN(0, I_n)$, $\lambda_0 = 0$. A constrained sample is generated by integrating~\eqref{eq:ldf} from $t = 0$ to $t = 1$ and returning $x_1$, as outlined in Algorithm~\ref{alg:equality_constrained} below.

\begin{algorithm}
\caption{Equality-constrained Lagrangian Dual Flows}\label{alg:equality_constrained}
\begin{algorithmic}[1]
\Require Trained flow model $v_\theta(x, t)$, constraint $g(x)$, penalty $c > 0$, constant $p \geq 1$.
\Ensure Final state $x_1$ such that $g(x_1) = 0$.
\State Initialize the state $x_0 \sim \mathcal{N}(0, I_n)$.
\State Initialize the dual $\lambda_0 = 0$.
\State Integrate from $t=0$ to $t=1$:
\begin{align*}
    \dot x_t &= v_\theta(x_t, t) - \trasp{ \jac{g}(x_t)}\,\lambda_t - c\,\trasp{\jac{g}(x_t)}\,g(x_t), \\
    \dot \lambda_t &= g(x_t) / (1-t)^p.
\end{align*}
\Return $x_1$
\end{algorithmic}
\end{algorithm}

The dynamics~\eqref{eq:ldf} modify the unconstrained flow~\eqref{eq:flow} through two correction terms in~\eqref{eq:x} and the introduction of an auxiliary equation~\eqref{eq:lambda}. First, the term $-c \jac{g}(x)^\top g(x)$ in~\eqref{eq:x} pulls the sample toward the constraint set with the strength of the pull controlled by the weight $c > 0$. Second, the term $-\jac{g}(x)^\top \lambda$ applies a correction in directions normal to the constraint set with magnitude set by the dual variable $\lambda$. Rather than a fixed penalty or a quantity computed from $x$, $\lambda$ is a system state and is integrated over time according to the dynamics~\eqref{eq:lambda}. The time-rescaling factor $(1-t)^{-p}$ makes this accumulation increasingly aggressive as $t \to 1^-$, concentrating the correction near the end of the integration interval. This rescaling is necessary because, unlike in \cite{platt1987constrained} where integration is from $t=0$ to $t=\infty$, flow models integrate from $t=0$ to $t=1$. The exponent $p > 0$ governs the rate of this concentration: its choice is the subject of our theoretical analysis in Section~\ref{sec:theory}. Intuitively, growth in $(1-t)^{-p}$ is balanced by decay in $g(x_t)$ during the integration process.

\textit{Computational cost.} Sampling from~\eqref{eq:ldf} only requires the integration of an augmented ODE, carried out with the same off-the-shelf integrators used for unconstrained flow matching. Compared to unconstrained flow matching, LDF adds $m$ scalar ODE channels for the dual variables and, per evaluation of the right-hand-side, a single vector-Jacobian product (VJP) $\trasp{[\lambda_t + c g(x_t)]}J_g(x_t)$. This VJP is cheaply available via automatic differentiation. Importantly, LDF does not require the construction of any pseudoinverse or optimization subproblem at each integration step. 

\subsection{Theoretical Analysis}
\label{sec:theory}

We now establish that LDF drives the equality constraint violation to zero as $t \to 1^-$. We defer discussion of inequality constraints to Section~\ref{sec:inequality}. Our analysis rests on the following standing assumptions, which we take to hold along the trajectory of the dynamics~\eqref{eq:ldf}.

\begin{itemize}
    
\item[(A1)]\phantomsection\label{assm:A1} \emph{Smoothness:}
$g \in C^2(\R^n; \R^m)$ and $v_\theta \in C^1(\R^n \times [0,1); \R^n)$, and there exist constants $L, B \geq 0$ such that $\norm{\jac{g}(x)}_{2} \leq L$ and $\norm{v_\theta(x,t)} \le B$ along the trajectory.

\item[(A2)]\phantomsection\label{assm:A2} \emph{Constraint qualification:}
There exist constants $0 < a_{\min} \leq a_{\max} < \infty$ such that $a_{\min} I_m \preceq \jac{g}(x)\trasp{\jac{g}(x)} \preceq a_{\max} I_m$ along the trajectory.

\item[(A3)]\phantomsection\label{assm:A3} \emph{Bounded trajectory:}
The trajectory $(x_t, \lambda_t)$ exists on $[0,1)$ with $x_t$ contained in a compact set.
\end{itemize}

Assumption~\Aref{1} is a standard regularity condition. Assumption~\Aref{2} is a uniform full-rank constraint qualification condition on the constraint Jacobian, ensuring that the constraint directions remain well-conditioned along the trajectory. Assumption~\Aref{3} rules out finite-time blowup.

\textit{The role of the exponent $p$.} The choice of the time-rescaling exponent $p$ in~\eqref{eq:lambda} governs the asymptotic behavior of the constraint violation as $t \to 1^-$. We focus on $p = 2$, which yields exact convergence under the assumptions above, and summarize the behavior for other values of $p$ at the end of this section and provide more details in Appendix~\ref{app:different-p}.

\textit{Main result ($p=2$).} Before stating the theorem, we describe the change of variables underlying the analysis. The singularity of~\eqref{eq:lambda} at $t = 1$ is naturally handled by passing to the time variable $\tau := -\log(1-t)$, which maps $[0,1)$ onto $[0, \infty)$, together with the rescaled dual variable $\eta := (1-t)\lambda$. In these variables convergence as $t \to 1^-$ corresponds to convergence as $\tau \to \infty$. The analysis then proceeds by constructing a Lyapunov function for the pair $(g, \eta)$ and applying the comparison lemma~\cite[Lemma~3.4]{khalil2002nonlinear} to obtain an explicit convergence rate. The construction additionally requires a condition controlling the variation of the constraint Jacobian.

\begin{itemize}                 \item[(A4)]\phantomsection\label{assm:A4} \emph{Small variation:} There exists a reference matrix $A_* \succ 0$ such that, along the trajectory, $\rho := K_{A_*} \sup_t \norm{\jac{g}(x_t)\trasp{\jac{g}(x_t)} - A_*}_{2} < 1$, where $K_{A_*} = \norm{A_*^{-1} + \tfrac{1}{2}I}_{2} + 1$.     
\end{itemize}     

We note that Assumption~\Aref{4} holds automatically with $\rho = 0$ for affine constraints, where $\jac{g}(x)\trasp{\jac{g}(x)}$ is constant.

\begin{theorem}[Convergence at $p=2$]
\label{thm:p2}
Under Assumptions~\Aref{1}--\Aref{4} and with $p = 2$, the trajectory of~\eqref{eq:x}--\eqref{eq:lambda} satisfies
\begin{equation*}
    \norm{g(x_t)} \to 0 \quad \text{and} \quad (1-t)\norm{\lambda_t} \to 0 \quad \text{as } t \to 1^-.
\end{equation*}
Moreover, the convergence is at an explicit algebraic rate. With $\alpha := (1-\rho)/\lambda_{\max}(P) > 0$, where $\rho < 1$ is the small-variation constant of \Aref{4}, $\lambda_{\max}(P)$ is the largest eigenvalue of the symmetric matrix $P = \left(\begin{smallmatrix} 2A_*^{-1}+I_m & -I_m \\ -I_m & I_m+A_* \end{smallmatrix}\right)$, and $A_*$ is the reference matrix from~\Aref{4}, 
we have convergence at the rate,
\begin{equation*}
    \norm{g(x_t)} = O\big((1-t)^{\alpha}\big) \quad \text{and} \quad (1-t)\norm{\lambda_t} = O\big((1-t)^{\alpha}\big) \quad \text{as } t \to 1^-.
\end{equation*}
\end{theorem}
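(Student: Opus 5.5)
The plan is to pass to the logarithmic time $\tau = -\log(1-t)$ and the rescaled dual variable $\eta = (1-t)\lambda$, as described before the theorem. In these variables the pair $z = (G,\eta)$, with $G := g(x_t)$, satisfies an autonomous linear system plus perturbations that decay like $e^{-\tau}$. I will build a quadratic Lyapunov function from $P$, control the variation of $A(x) := \jac{g}(x)\trasp{\jac{g}(x)}$ through \Aref{4}, and close the argument with the comparison lemma.

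\textbf{Step 1 (transformed dynamics).} Since $dt/d\tau = 1-t = e^{-\tau}$, differentiating along \eqref{eq:x} gives $\tfrac{d}{dt} G = \jac{g} v_\theta - A\lambda - cAG$. Hence
\begin{equation*}
G' = -A\eta + e^{-\tau}\big(\jac{g} v_\theta - cAG\big).
\end{equation*}
From \eqref{eq:lambda} with $p=2$ we get $\tfrac{d}{dt}\eta = -\lambda + G/(1-t)$, so
\begin{equation*}
\eta' = G - \eta .
\end{equation*}
Freezing $A = A_*$, the nominal system is $z' = M_* z$ with $M_* = \left(\begin{smallmatrix} 0 & -A_* \\ I & -I\end{smallmatrix}\right)$. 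A direct computation, using only the symmetry of $A_*$, shows that for $V(z) = z^\top P z$ we have $\trasp{M_*}P + PM_* = -2I$. That is, along the nominal flow $V' = -2\norm{G}^2 - 2\norm{\eta}^2$.

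\textbf{Step 2 (handling $A \neq A_*$, the main obstacle).} Along the true flow the discrepancy term is $-2\,z^\top P\,(\,(A-A_*)\eta,\,0\,)$. This equals $-2G^\top(2A_*^{-1}+I)(A-A_*)\eta + 2\eta^\top(A-A_*)\eta$. Each piece is bounded by a constant times $\norm{A-A_*}_2\norm{z}^2$. The task is to organize these bounds so that the total is at most $2\rho\norm{z}^2$ with exactly the constant $K_{A_*} = \norm{A_*^{-1}+\tfrac12 I}_2 + 1$; I expect Young's inequality on the cross term to produce that constant. This gives
\begin{equation*}
V' \le -2(1-\rho)\norm{z}^2 + (\text{decaying terms}) \le -\tfrac{2(1-\rho)}{\lambda_{\max}(P)}V + (\text{decaying terms}).
\end{equation*}
This bookkeeping, matching the paper's $K_{A_*}$, is where I expect the most care to be needed.

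\textbf{Step 3 (perturbations and comparison).} By \Aref{1}--\Aref{3}, $\norm{\jac{g} v_\theta} \le LB$. The term $-cAG$ contributes at most $e^{-\tau}C_1\norm{z}^2 \le e^{-\tau}C_1' V$, since $\lambda_{\min}(P) > 0$ follows from the nominal Lyapunov equation. The $\jac{g}v_\theta$ term contributes at most $e^{-\tau}C_2\sqrt{V}$. Setting $W = \sqrt V$, I obtain
\begin{equation*}
W' \le \big(-\alpha + C_1'' e^{-\tau}\big)W + C_2' e^{-\tau}.
\end{equation*}
Since $e^{-\tau}$ is integrable, the factor $\exp\!\big(\int C_1'' e^{-\tau}\big)$ is bounded. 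Applying the comparison lemma then yields
\begin{equation*}
W(\tau) \le C\big(e^{-\alpha\tau} + e^{-\tau}\big)
\end{equation*}
when $\alpha < 1$, up to a factor $\tau$ in the resonant case. Note that $\lambda_{\max}(P) \ge 1 + \lambda_{\min}(A_*) > 1$ from the $(2,2)$ block, so $\alpha < 1$, and $e^{-\tau}$ is dominated by $e^{-\alpha\tau}$.

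\textbf{Conclusion.} Translating back with $e^{-\alpha\tau} = (1-t)^\alpha$, and using $\norm{G},\norm{\eta} \le W/\sqrt{\lambda_{\min}(P)}$, gives $\norm{g(x_t)} = O((1-t)^\alpha)$ and $(1-t)\norm{\lambda_t} = O((1-t)^\alpha)$. Both quantities therefore converge to zero as $t\to1^-$.
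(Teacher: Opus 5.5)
Your proposal is correct and follows the paper's proof essentially step for step. It uses the same time change and rescaled dual, the same $P$ (your $V$ is twice the paper's), the same Young's-inequality bookkeeping that produces exactly $K_{A_*}$, and the comparison lemma on $\sqrt{V}$ with the same $\alpha=(1-\rho)/\lambda_{\max}(P)$. The deviations are minor. You absorb the $-cA\,g(x_t)$ term as an $e^{-\tau}$-weighted linear perturbation via an integrating factor, whereas the paper bounds it as constant forcing using compactness. Your claim that $P\succ 0$ ``follows from the Lyapunov equation'' tacitly needs $M_*$ to be Hurwitz (each eigen-block has characteristic polynomial $s^2+s+\mu_i$ with $\mu_i>0$); the paper avoids this by checking diagonal dominance of $P$ directly.
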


In particular, for affine constraints the conclusions of Theorem~\ref{thm:p2} hold without Assumption~\Aref{4}.

We conclude this section with several remarks on convergence properties for different values of $p \in [1,\infty)$. More details are available in Appendix~\ref{app:different-p}.

For $p \in [1,2)$, we are unable to establish constraint satisfaction under the dynamics~\eqref{eq:ldf}. In Proposition~\ref{prop:small-p} we prove the residual $\|g(x_t)\|$ converges to a limit as $t \to 1^{-}$, but this limit is generally nonzero, so exact constraint satisfaction is not achieved. Indeed, for $p = 1$, we showcase a simple scalar system which converges to a nonzero value of the residual as $t \to 1^-$.

For $p > 2$, we do not have a general proof of constraint satisfaction although numerical experiments indicate convergence. In Appendix~\ref{app:p-greater-2}, we analytically show that a simple scalar system exhibits constraint satisfaction with $p > 2$. However, the dynamics become increasingly difficult to integrate as $t \to 1^{-}$ because $(1-t)^{-p}$ grows increasingly rapidly. Therefore, we suggest $p = 2$ as a reasonable tradeoff between constraint satisfaction and ease of integration.

\subsection{Inequality Constraints}
\label{sec:inequality}

We now extend the method to inequality constraints $h(x) \le 0$. The extension is via a standard slack-variable reformulation, with one modification that keeps the resulting dynamics Lipschitz continuous.
We convert the inequality $h(x) \leq 0$ into an equality by introducing a slack variable $s \in \R^k$ and writing
\begin{equation*}
    h(x) + s = 0, \qquad s \geq 0,
\end{equation*}
so that $h(x) + s = 0$ together with $s \geq 0$ recovers $h(x) \leq 0$. Treating $\tilde g(x, s) := h(x) + s$ as an equality constraint on the augmented state $(x, s)$ and applying the dual flow of Section~\ref{sec:method-dynamics} would yield dynamics for $(x, s, \lambda)$ analogous to~\eqref{eq:ldf}. The difficulty is the nonnegativity requirement $s \geq 0$: the slack dynamics produced this way do not keep $s$ nonnegative on their own, and enforcing $s \geq 0$ by hard projection in the ODE turns the dynamics into a projected dynamical system~\cite{nagurney2012projected}, which is discontinuous.

\textit{Soft projection.} We instead enforce $s \geq 0$ through a Lipschitz relaxation. The resulting Lagrangian dual flow for inequality constraints is
\begin{subequations}
\begin{align}
\dot x_t &= v_\theta(x_t, t) - \trasp{\jac{h}(x_t)}\,\lambda_t - c\,\trasp{\jac{h}(x_t)}\,(h(x_t) + s_t), \label{eq:ineq-x}\\
\dot s_t &= c\left[-s_t + \mathrm{ReLU}\!\left(-h(x_t) - \frac{1}{c}\lambda_t\right)\right], \label{eq:ineq-s}\\
\dot \lambda_t &= \frac{h(x_t) + s_t}{(1-t)^p}, \label{eq:ineq-lambda}
\end{align}
\end{subequations}
with $x_0 \sim \mathcal{N}(0, I_n)$, $\lambda_0 = 0$, $s_0 \geq 0$, and $\mathrm{ReLU}(y) = \max(y, 0)$. The slack equation~\eqref{eq:ineq-s} is a globally Lipschitz relaxation of the projected dynamics. This form is the continuous-time analog of a projected-gradient step on the slack variable with step size $1/c$. This approach is inspired by work on continuous-time projected and proximal gradient dynamics~\cite{XingBaoGao2003,HASSANMOGHADDAM2021109311,AD-VC-AG-GR-FB:23f}.

The resulting inference procedure is summarized in Algorithm~\ref{alg:inequality_constrained} below. Again we obtain a single ODE that is integrated from $t=0$ to $t=1$: the same off-the-shelf ODE tools used for unconstrained generation can be used for constrained generation.

\begin{algorithm}
\caption{Inequality-constrained Lagrangian Dual Flows}\label{alg:inequality_constrained}
\begin{algorithmic}[1]
\Require Trained flow model $v_\theta(x, t)$, constraint $h(x)$, penalty $c > 0$, constant $p \geq 1$.
\Ensure Final state $x_1$ such that $h(x_1) \leq 0$.
\State Initialize the state $x_0 \sim \mathcal{N}(0, I_n)$.
\State Initialize the slack $s_0 = \max\{-h(x_0), 0\}$.
\State Initialize the dual $\lambda_0 = 0$.
\State Integrate from $t=0$ to $t=1$:
\begin{align*}
    \dot x_t &= v_\theta(x_t, t) - \trasp{\jac{h}(x_t)}\,\lambda_t - c\,\trasp{\jac{h}(x_t)}\,(h(x_t) + s_t), \\
    \dot s_t &= c\left[-s_t + \max\left(-h(x_t) - \frac{1}{c}\lambda_t, 0\right)\right],\\
    \dot \lambda_t &= (h(x_t) + s_t)/(1-t)^p.
\end{align*}
\Return $x_1$
\end{algorithmic}
\end{algorithm}

These dynamics have a useful property: nonnegativity of the slack is preserved automatically, with no projection step and no parameters beyond those already present in the equality-constrained case.

\begin{lemma}[Positive invariance]
\label{lem:posinv}
Along the slack dynamics~\eqref{eq:ineq-s} with $s_0 \geq 0$, the slack satisfies $s_t \geq 0$ for all $t \in [0,1)$.
\end{lemma}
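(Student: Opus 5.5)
The slack equation~\eqref{eq:ineq-s} is linear in $s$ with a nonnegative forcing term, so it can be solved explicitly by variation of constants. Set $u_t := \relu\big(-h(x_t) - \tfrac{1}{c}\lambda_t\big)$, which is a continuous function of $t$ on $[0,1)$ along any solution of the coupled system, since $x_t,\lambda_t$ are continuous and $\relu$ is continuous. Then, componentwise, $\dot s_t = -c\,s_t + c\,u_t$, and multiplying by the integrating factor $e^{ct}$ gives
\begin{equation*}
    s_t = e^{-ct}s_0 + c\int_0^t e^{-c(t-r)}\,u_r\,dr, \qquad t \in [0,1).
\end{equation*}
This identity holds for the actual trajectory: we simply treat $u_t$ as a known continuous input once the coupled solution is fixed.

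The conclusion then follows term by term. The first term is nonnegative because $s_0 \ge 0$ and $e^{-ct} > 0$. The integral term is nonnegative because $c>0$, the exponential kernel is positive, and $u_r \ge 0$ for every $r$, since $\relu$ takes values in $[0,\infty)$. Hence $s_t \ge 0$ componentwise for all $t\in[0,1)$ on which the solution exists. I take existence on $[0,1)$ as given, as in \Aref{3}.

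The only point I expect to need care is legitimizing the variation-of-constants formula for a coupled nonlinear system. The key observation is that we need not decouple anything: $u$ is merely a continuous scalar function of time along the given solution, and the scalar linear ODE $\dot s = -cs + cu$ has the unique $C^1$ solution above. An alternative, equally short route is a Nagumo-type invariance check: on the boundary face $\{s^{(i)}=0\}$ we have $\dot s^{(i)} = c\,u^{(i)} \ge 0$. Together with global Lipschitz continuity of the right-hand side in $s$, this gives forward invariance of the nonnegative orthant. I would present the explicit formula as the main argument, since it avoids citing invariance theorems.
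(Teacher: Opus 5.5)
Your proof is correct, but it takes a different route from the paper. The paper's proof is the boundary check you list as your alternative: if $s^i_t = 0$ then $\dot s^i_t = c\,\relu(-h^i(x_t) - \lambda^i_t/c) \ge 0$, so $s^i$ ``cannot decrease past zero.'' Your main argument instead treats $u_t = \relu(-h(x_t) - \lambda_t/c)$ as a continuous, nonnegative input along the fixed trajectory. You then write $s_t = e^{-ct}s_0 + c\int_0^t e^{-c(t-r)}u_r\,dr$. This formula is justified simply by differentiating $e^{ct}s_t$ and integrating, so no decoupling or uniqueness theorem is needed.

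Your route buys two things. First, it is fully self-contained and closes a small gap in the paper's one-line argument. Knowing $\dot s^i \ge 0$ on the face $\{s^i = 0\}$ does not by itself rule out leaving the orthant. One also needs some regularity: Lipschitz continuity in $s$, as you note, or the observation that $\dot s^i = c(u^i - s^i) > 0$ whenever $s^i < 0$. Second, it gives the quantitative bound $s_t \ge e^{-ct}s_0$, so slack components that start positive stay strictly positive.

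The paper's route buys brevity. It also carries over verbatim to slack dynamics that are not linear in $s$, where no closed form exists, provided the vector field is Lipschitz and points inward on each face.

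One small citation slip: in the inequality setting the existence assumption is \Bref{3}, not \Aref{3}. The lemma needs neither, since your formula holds on any interval where the solution exists.
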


\textit{Convergence.} Under similar assumptions as in the equality-constrained case, but now imposed on $h$ in place of $g$ (see Assumptions~\Bref{1}-\Bref{4} in Appendix~\ref{app:proofs} for specifics), we establish the soft-projected dynamics drive the inequality constraint to satisfaction as $t \to 1^-$.

\begin{theorem}[Convergence for inequality constraints at $p=2$]
\label{thm:ineq}
Suppose assumptions \Bref{1}-\Bref{4} of Appendix~\ref{app:proofs} hold: smoothness, constraint qualification, a bounded-trajectory condition, and the small-variation condition~\eqref{eq:ineq-smallvar}. In particular, \Bref{4} fixes a reference matrix $A_*$ with dissipation margin $\kappa_0$. Then with $p = 2$, the soft-projected dynamics~\eqref{eq:ineq-x}-\eqref{eq:ineq-lambda} satisfy
\begin{equation*}
    \norm{\relu\big(h(x_t)\big)} \to 0 \quad \text{as } t \to 1^-,
\end{equation*}
at an explicit algebraic rate: with $\alpha := (\kappa_0 - \rho)/\lambda_{\max}(P) > 0$, where $\rho < \kappa_0$ by~\eqref{eq:ineq-smallvar} and $\lambda_{\max}(P)$ is the maximal eigenvalue of the positive definite matrix defining the Lyapunov function of Lemma~\ref{lem:clf}, 
\begin{equation*}
\norm{\relu\big(h(x_t)\big)} = O\big((1-t)^{\alpha}\big) \quad \text{and} \quad (1-t)\norm{\lambda_t} = O\big((1-t)^{\alpha}\big) \quad \text{as } t \to 1^-.
\end{equation*}
That is, the generated sample becomes feasible in the limit. For affine constraints $h(x) = Hx + d$, condition \Bref{4} holds automatically and the conclusion follows from \Bref{1}-\Bref{3} alone.
\end{theorem}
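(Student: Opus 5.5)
The plan is to reduce Theorem~\ref{thm:ineq} to the same Lyapunov argument used for Theorem~\ref{thm:p2}, applied to the residual $r_t := h(x_t) + s_t$. Feasibility then comes from Lemma~\ref{lem:posinv}. Since $s_t \ge 0$ for all $t\in[0,1)$, we have $h(x_t) = r_t - s_t \le r_t$ componentwise. Because $\relu$ is monotone and nonexpansive, this gives $\norm{\relu(h(x_t))} \le \norm{\relu(r_t)} \le \norm{r_t}$. It therefore suffices to show $\norm{r_t} = O((1-t)^\alpha)$ and $(1-t)\norm{\lambda_t} = O((1-t)^\alpha)$.

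\textbf{Change of variables.} As for $p=2$ in the equality case, I will pass to $\tau = -\log(1-t)$ and $\eta = (1-t)\lambda$. Write $A(x) = \jac{h}(x)\trasp{\jac{h}(x)}$ and $' = d/d\tau = (1-t)\,d/dt$. Differentiating along \eqref{eq:ineq-x}--\eqref{eq:ineq-lambda} gives
\begin{align*}
\eta' &= -\eta + r,\\
r' &= -A(x)\,\eta + e^{-\tau}\Big[\jac{h}(x)v_\theta - c A(x)\, r + c\big(\relu(-h - \tfrac{1}{c}\lambda) - s\big)\Big].
\end{align*}
The slack term needs special care. Using $-h = s - r$ and $s = \relu(s)$, nonexpansiveness of $\relu$ gives $\norm{\relu(s - r - \lambda/c) - s} \le \norm{r} + \norm{\lambda}/c$. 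After the factor $e^{-\tau}c$, this term is bounded by $c e^{-\tau}\norm{r} + \norm{\eta}$. The piece $\norm{\eta}$ does \emph{not} decay with $\tau$. It is a persistent, state-dependent perturbation of the nominal linear system $r' = -A_*\eta$, $\eta' = -\eta + r$.

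\textbf{Lyapunov construction.} I will invoke the Lyapunov function of Lemma~\ref{lem:clf}, $V = z^\top P z$ with $z = (r,\eta)$. For the nominal system with $A_*$, this gives $V' \le -2\kappa_0 \norm{z}^2$ (up to the normalization used there). Here $\kappa_0$ is the dissipation margin fixed by \Bref{4}, which I expect already accounts for the worst-case slack perturbation $\norm{\eta}$ above. Next I bound the deviation $(A(x) - A_*)\eta$ through the small-variation constant $\rho$ of \eqref{eq:ineq-smallvar}. The remaining $O(e^{-\tau})$ terms are controlled by \Bref{1} and \Bref{3}: $\norm{v_\theta}\le B$, $\norm{\jac{h}}\le L$, and the compact range of $x$ keeps $\norm{r}$ bounded. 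Combining these yields the differential inequality
\[
V' \le -\tfrac{2(\kappa_0-\rho)}{\lambda_{\max}(P)} V + C e^{-\tau}\big(V + \sqrt{V}\big).
\]
The $e^{-\tau}V$ term is integrable in $\tau$ and is absorbed into a bounded multiplicative factor. Setting $W = \sqrt{V}$ and applying the comparison lemma~\cite[Lemma~3.4]{khalil2002nonlinear} gives $W(\tau) = O(e^{-\alpha\tau}) + O(e^{-\tau})$. Since the rate is dominated by the slower exponent, this becomes $O(e^{-\min(\alpha,1)\tau})$; I expect $\alpha \le 1$ from the definition of $P$, but this needs checking. Translating back via $e^{-\tau} = 1-t$ gives the stated rates.

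\textbf{Main obstacle.} The hard step is the slack perturbation: its non-decaying contribution of size $\norm{\eta}$ enters the $r$-equation with the same order as the nominal coupling $-A_*\eta$. I must show this is dominated by the dissipation margin, i.e.\ that $\kappa_0>0$ as defined in \Bref{4}. My first attempt will use a finer bound on $\relu(\cdot) - s$ that exploits its sign structure in each component. The aim is to write the term as $-D\eta$ plus a term in $r$, with $D$ diagonal and $0\preceq D\preceq I$, so that it acts as extra damping rather than a destabilizing perturbation. If that fails, I will treat it crudely as a norm-bounded perturbation and absorb it into $\kappa_0$ through the cross terms of $P$. For affine $h$, $A(x)\equiv HH^\top$, so $\rho=0$ and only $\kappa_0>0$ is needed.
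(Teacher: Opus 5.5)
Your skeleton matches the paper's proof: the reduction to $r=h+s$ via Lemma~\ref{lem:posinv} and monotonicity of $\relu$, the change of variables, the Lyapunov function of Lemma~\ref{lem:clf}, the $\rho$-bound, and the comparison lemma. But the one step that is new relative to Theorem~\ref{thm:p2} is left open, and your expectation about how it resolves is mistaken.

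\emph{The fallback fails.} $\kappa_0$ is not a robustness margin that absorbs a sign-agnostic perturbation of size $\norm{\eta}$. The bound $\norm{\delta}\le\norm{\eta}$ permits $\delta=+\eta$. Already for $k=1$ and $A_*=a<1$, the resulting system $r'=(1-a)\eta$, $\eta'=r-\eta$ has characteristic polynomial $\mu^2+\mu-(1-a)$, which has a positive root. So no argument using only that norm bound can prove decay.

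\emph{What Lemma~\ref{lem:clf} actually says.} It is not a statement about the single nominal system $r'=-A_*\eta$. It asserts that one fixed $P$ dissipates along $M_E$ for \emph{every} $0/1$ diagonal projector $E$. $\kappa_0$ is the margin that holds uniformly over this switching family.

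\emph{The missing step} is your ``first attempt,'' carried out exactly. Take the active set $\mathcal S=\{i:-h^i-\lambda^i/c>0\}$ and $E_{\mathcal S}=\mathrm{diag}(\mathbf 1_{i\in\mathcal S})$. On active components $c\bigl(\relu(-h^i-\lambda^i/c)-s^i\bigr)=-c\,r^i-\lambda^i$, and on inactive ones it equals $-c\,s^i$. Hence
\[ c\,e^{-\tau}\bigl(\relu(-h-\tfrac1c\lambda)-s\bigr) = -E_{\mathcal S}\eta - c\,e^{-\tau}E_{\mathcal S}r - c\,e^{-\tau}E_{\mathcal S^c}s, \]
so $r'=-(A(x)+E_{\mathcal S})\eta+e^{-\tau}\bigl[\jac{h}(x)v_\theta-c(A(x)+E_{\mathcal S})r-cE_{\mathcal S^c}s\bigr]$.

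The non-decaying slack contribution therefore has the stabilizing sign and merely shifts $A_*$ to $A_*+E_{\mathcal S}$. Lemma~\ref{lem:clf} then gives the uniform dissipation $-\kappa_0\norm{z}^2$ (with $V=\tfrac12 z^\top Pz$) in every regime, because the Schur complement of $D_E$ collapses to $-A_*$ when $E^2=E$. The inactive term is $O(e^{-\tau})$ by \Bref{3}, which bounds $s$ and not just $x$. After that, your $\rho$ and forcing bounds go through.

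Your diagonal-$D$ variant also works. On inactive components $0\le s^i\le r^i+\lambda^i/c$, so $s^i=\theta_i(r^i+\lambda^i/c)$ with $\theta_i\in[0,1]$. You must then extend Lemma~\ref{lem:clf} beyond projectors. This is easy: $M_D^\top P+PM_D$ is affine in $D$, and $[0,1]^k$ is the convex hull of $\{0,1\}^k$, so convexity of $\lambda_{\max}$ gives $\lambda_{\max}(M_D^\top P+PM_D)\le-2\kappa_0$.

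\emph{The deferred rate check.} Both $P$ and $-D_E$ have top-left block $I$, so $\lambda_{\max}(P)\ge1$ and $\kappa_0\le\tfrac12$. This gives $\alpha\le\tfrac12<1$. You need this for the stated rate; otherwise the $e^{-\tau}$ forcing could cap the rate at $\min(\alpha,1)$ or produce $\tau e^{-\tau}$.
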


Notably, the constraint qualification \Bref{2} is required only on $\jac{h}(x)\trasp{\jac{h}(x)}$ itself, and the convergence matches the equality case. In particular, the soft projection contributes a controlled perturbation that vanishes as $c \to \infty$, as detailed in Appendix~\ref{app:proofs}.

\textit{Combined constraints.} In applications it is common to impose equality and inequality constraints simultaneously. Both constraint families are handled at once by stacking them. Writing the combined constraint as $G(x, s) := (g(x),\, h(x) + s)$ with separate dual variables for each family, the dynamics combine the equality flow of Section~\ref{sec:method-dynamics} with the soft-projected slack flow above. 

\section{Numerical Results}\label{sec:numerical_results}

\begin{figure}
    \centering
    \includegraphics[width=\linewidth]{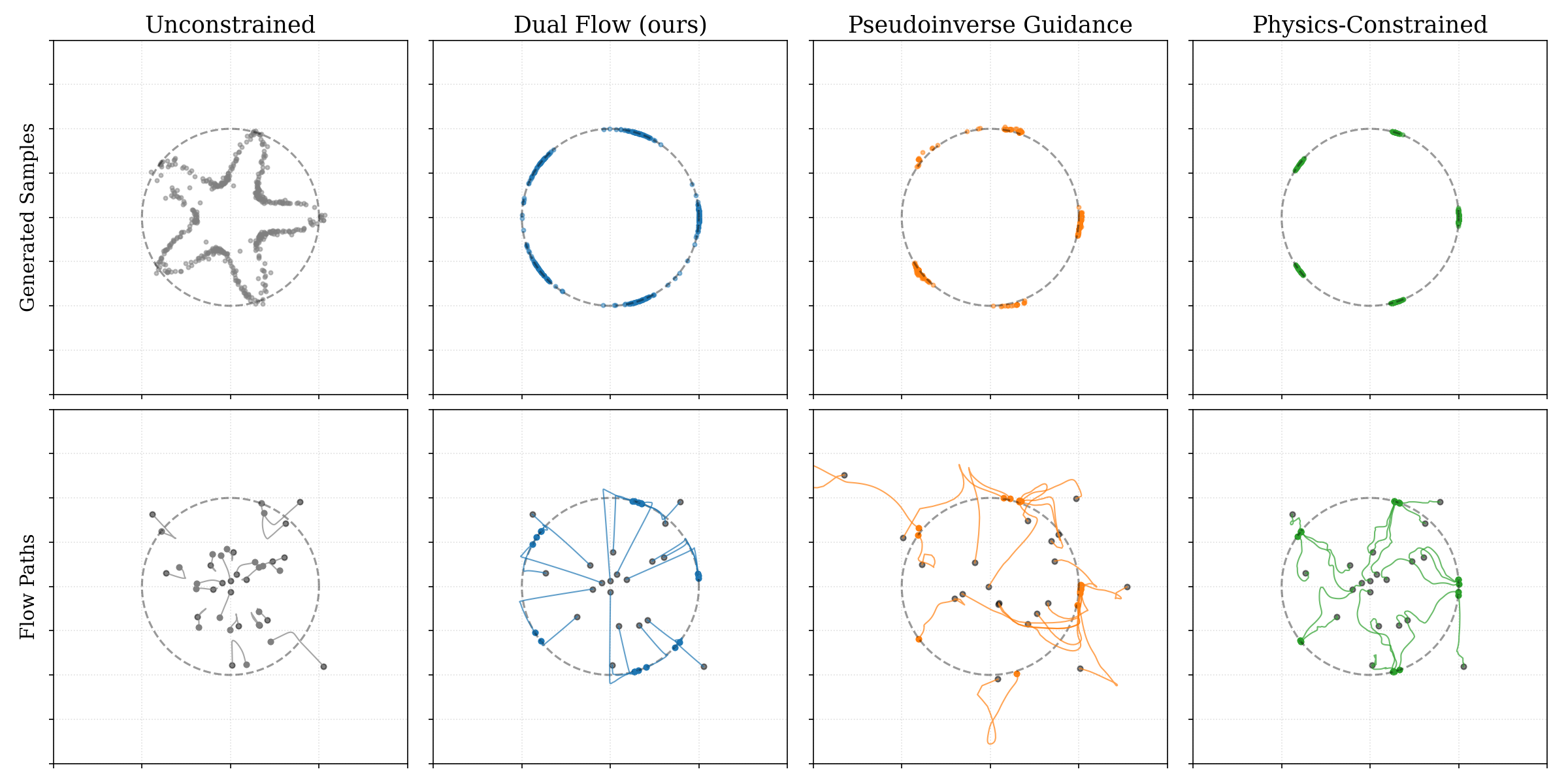}
    \caption{Generated star examples subject to a unit circle constraint. Compares our approach, pseudoinverse guidance \cite{pokle2024training}, and physics-constrained flow matching \cite{utkarsh2025pcfm}.}
    \label{fig:constrained_star}
\end{figure}

To test the effectiveness of our proposed approach, we implemented algorithms \ref{alg:equality_constrained} and \ref{alg:inequality_constrained} in JAX, using FLAX/NNX for neural networks and Diffrax~\cite{kidger2021on} for ODE integration. All experiments were performed on an NVIDIA RTX Pro 2000 laptop GPU (8GB). Code is available at \url{https://github.com/vincekurtz/constrained_flow_matching}.

\subsection{Star with Unit Circle Constraint}

We first consider a simple example with nonlinear equality constraints. In particular, we pre-train a small feed-forward network to generate samples from a star-shaped distribution, shown in the leftmost column of Fig.~\ref{fig:constrained_star}. We then impose a unit circle constraint
\begin{equation*}
    g(x) = \|x\|^2 - 1.
\end{equation*}

Figure~\ref{fig:constrained_star} compares constrained samples generated with LDF to those generated using pseudoinverse guidance \cite{pokle2024training} and PCFM \cite{utkarsh2025pcfm}. For pseudoinverse guidance, which is designed for linear constraints only, we linearize the constraint about $x_t$ at each denoising step. All methods use 100 steps ($dt=0.01$) of the second-order midpoint rule for integration: further details can be found in Appendix~\ref{app:implementation}.

All methods produce samples on or near the constraint manifold. However, plotting the paths of generated samples illustrates stark differences (bottom row of Fig.~\ref{fig:constrained_star}. Our dual flow method moves samples directly toward the constraint manifold and then flows along the manifold, while the other methods take more circuitous paths.

\subsubsection{Performance Characterization}

\begin{wrapfigure}{r}{0.5\linewidth}
    \centering
    \vspace{-5em}
    \includegraphics[width=\linewidth]{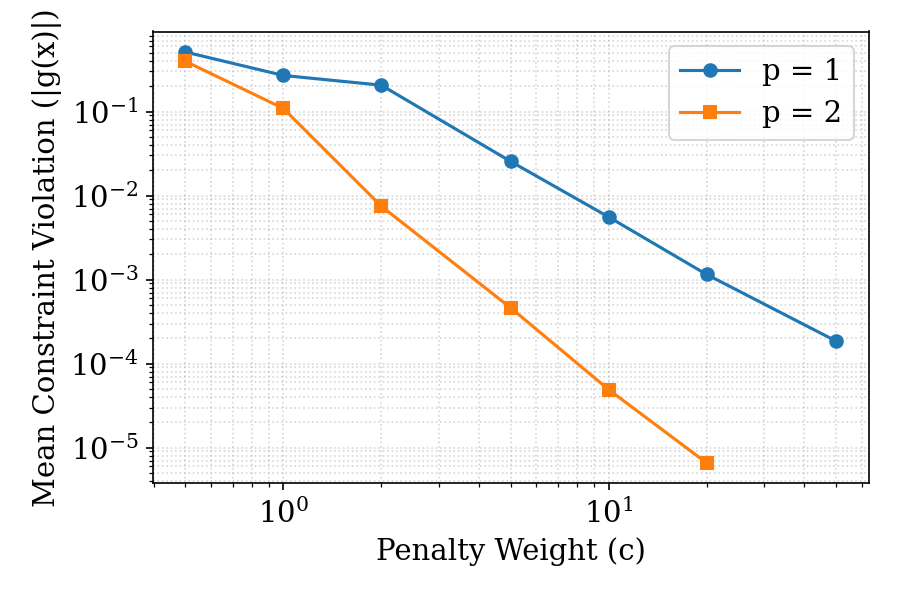}
    \caption{Average constraint violation for the star example, subject to a unit circle constraint. }
    \vspace{-3em}
    \label{fig:violation_vs_penalty}
\end{wrapfigure}

Generation times and the resulting constraint violation are shown in Table~\ref{tab:generation_times}. Physics-constrained flow matching \cite{utkarsh2025pcfm} achieves the tightest constraint satisfaction, thanks to hard projections to the constraint manifold at each denoising step. This comes at some computational expense, however, not to mention implementation difficulty. By contrast, our dual flow method adds only a few lines of code and a single Jacobian-vector product: no expensive pseudoinverses, projections, or optimization subproblems required.

\begin{table}[]
    \centering
    \begin{tabular}{|c|c|c|c|c|}
        \hline
        Example & Denoising Steps & Method & Generation Time (ms) & Constraint Violation \\
        \hline
        Star & 10 & Pseudoinverse \cite{pokle2024training} & 2.05 & $1.2 \times 10^{-2}$ \\
        Star & 10 & PCFM \cite{utkarsh2025pcfm} & 7.50 & $\mathbf{4.2 \times 10^{-8}}$ \\
        Star & 10 & Dual Flows (ours) & \textbf{1.45} & $3.3 \times 10^{-2}$ \\
        \hline
        Star & 100 & Pseudoinverse \cite{pokle2024training} & 17.57 & $3.1 \times 10^{-2}$ \\
        Star & 100 & PCFM \cite{utkarsh2025pcfm} & 54.48 & $\mathbf{3.0 \times 10^{-8}}$ \\
        Star & 100 & Dual Flows (ours) & \textbf{9.33} & $2.6 \times 10^{-3}$ \\
        \hline
        MNIST & 10 & Pseudoinverse \cite{pokle2024training} & 56.22 & $1.2 \times 10^{0}$ \\
        MNIST & 10 & PCFM \cite{utkarsh2025pcfm} & 191.82 & $\mathbf{6.0 \times 10^{-8}}$ \\
        MNIST & 10 & Dual Flows (ours) & \textbf{18.5} & $3.1 \times 10^{-1}$\\
        \hline
        MNIST & 100 & Pseudoinverse \cite{pokle2024training} & 533.43 & $1.1 \times 10^{-1}$ \\
        MNIST & 100 & PCFM \cite{utkarsh2025pcfm} & 1919.91 & $\mathbf{1.0 \times 10^{-7}}$ \\
        MNIST & 100 & Dual Flows (ours) & \textbf{183.31} & $4.9 \times 10^{-2}$ \\
        \hline
    \end{tabular}
    \vspace{1em}
    \caption{Mean generation time and constraint violation across 20 samples from the unit-circle-constrained star (Fig.~\ref{fig:constrained_star}) and MNIST inpainting (Fig.~\ref{fig:constrained_mnist}).}
    \label{tab:generation_times}
\end{table}

\subsubsection{Tightening Constraints}

Our theoretical analysis indicates that exact constraint satisfaction is achieved (under sufficient regularity conditions) for $p=2$ and an exact solution of the ODE. We find that performance in practice depends on choices of coefficients $c$ and $p$, as well as integration accuracy.

We investigate impact of the penalty weight $c$ and time-rescaling factor $p$ on constraint violation. To minimize the impact of integration errors, we use a 5th order Runge-Kutta scheme with error-controlled integration at accuracy $10^{-5}$, as implemented in Diffrax \cite{kidger2021on}. Figure~\ref{fig:violation_vs_penalty} shows that constraint violation in the final generated samples decreases predictably with the penalty weight $c$, and that using $p = 2$ provides tighter constraint satisfaction for the same penalty weight, though $p = 1$ also produces reasonable performance.

\subsubsection{Role of Lagrange Multipliers}

Given the importance of the penalty coefficient $c$, we might wonder whether the Lagrangian dual dynamics are necessary at all. To illustrate the role of the flowed dual $\lambda$, we compare with a penalty-only approach where we ignore the Lagrangian term. This is similar to the quadratic penalty approach used in \cite{ho2022video, chung2022mcg}. Using error-controlled integration as above, we generate samples using a range of penalty weights $c$. We record the resulting constraint violation, averaged over 200 samples, and number of denoising steps required. The number of denoising steps is a rough proxy for ODE stiffness: a more difficult ODE will require more steps to resolve to the requested accuracy. 

\begin{wrapfigure}{l}{0.5\linewidth}
    \centering
    \includegraphics[width=\linewidth]{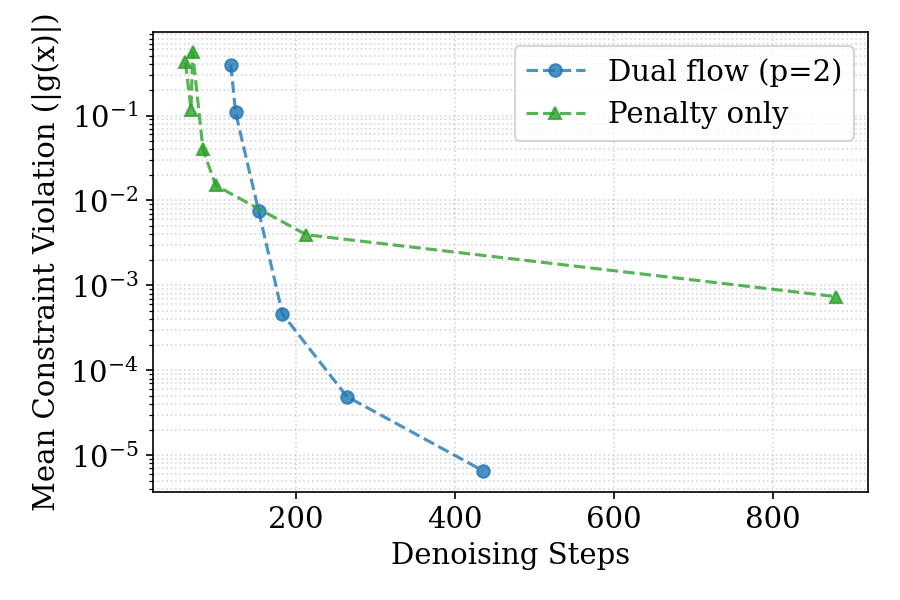}
    \caption{Comparison with a simple penalty method (e.g., $\lambda = 0$). Flowed Lagrangian dynamics allows us to further reduce constraint violations while limiting the stiffness of the ODE.}
    \vspace{-3em}
    \label{fig:steps_vs_violation}
\end{wrapfigure}

The resulting data is shown in Fig.~\ref{fig:steps_vs_violation}. Even with the stiffest penalty $c=50$, the penalty-only method is only able to reduce constraint violation to around $10^{-3}$. More critically, the penalty-only dynamics in this case become extremely stiff, requiring over 800 denoising steps to resolve. Meanwhile, LDF allows us to decrease constraint violations by two orders of magnitude with half the denoising steps. 

\subsection{Star with Inequality Constraints}

To illustrate the effectiveness of Algorithm~\ref{alg:inequality_constrained}, we impose a simple inequality constraint
on the pretrained star model, 
\begin{equation*}
    h\left(\begin{bmatrix}x^0\\x^1\end{bmatrix}\right) = -x^0,
\end{equation*}
which constrains generated samples to the right-hand side of the plane. The resulting samples are shown in Fig.~\ref{fig:inequality_star}. Generating 1000 constrained samples (in parallel, 100 denoising steps, midpoint integration) takes 0.0121 seconds, compared to 0.0114 seconds for unconstrained generation.

\begin{wrapfigure}{right}{0.5\linewidth}
    \vspace{-4em}
    \centering
    \includegraphics[width=0.9\linewidth]{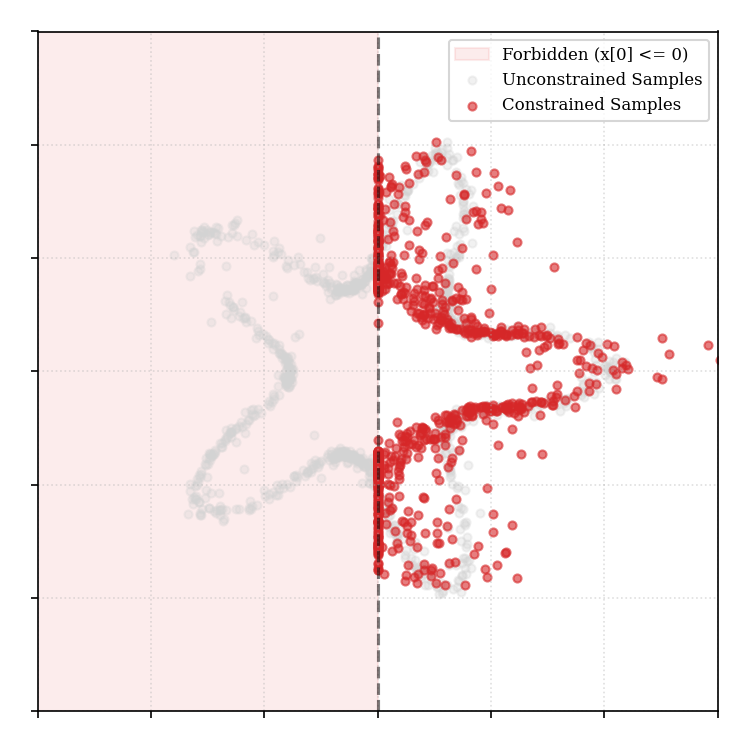}
    \caption{Samples from a pretrained ``star'' model, constrained to the right half of the plane using Algorithm~\ref{alg:inequality_constrained}.}
    \vspace{-1em}
    \label{fig:inequality_star}
\end{wrapfigure}

\subsection{MNIST Image Inpainting}

\begin{figure}
    \centering
    \includegraphics[width=\linewidth]{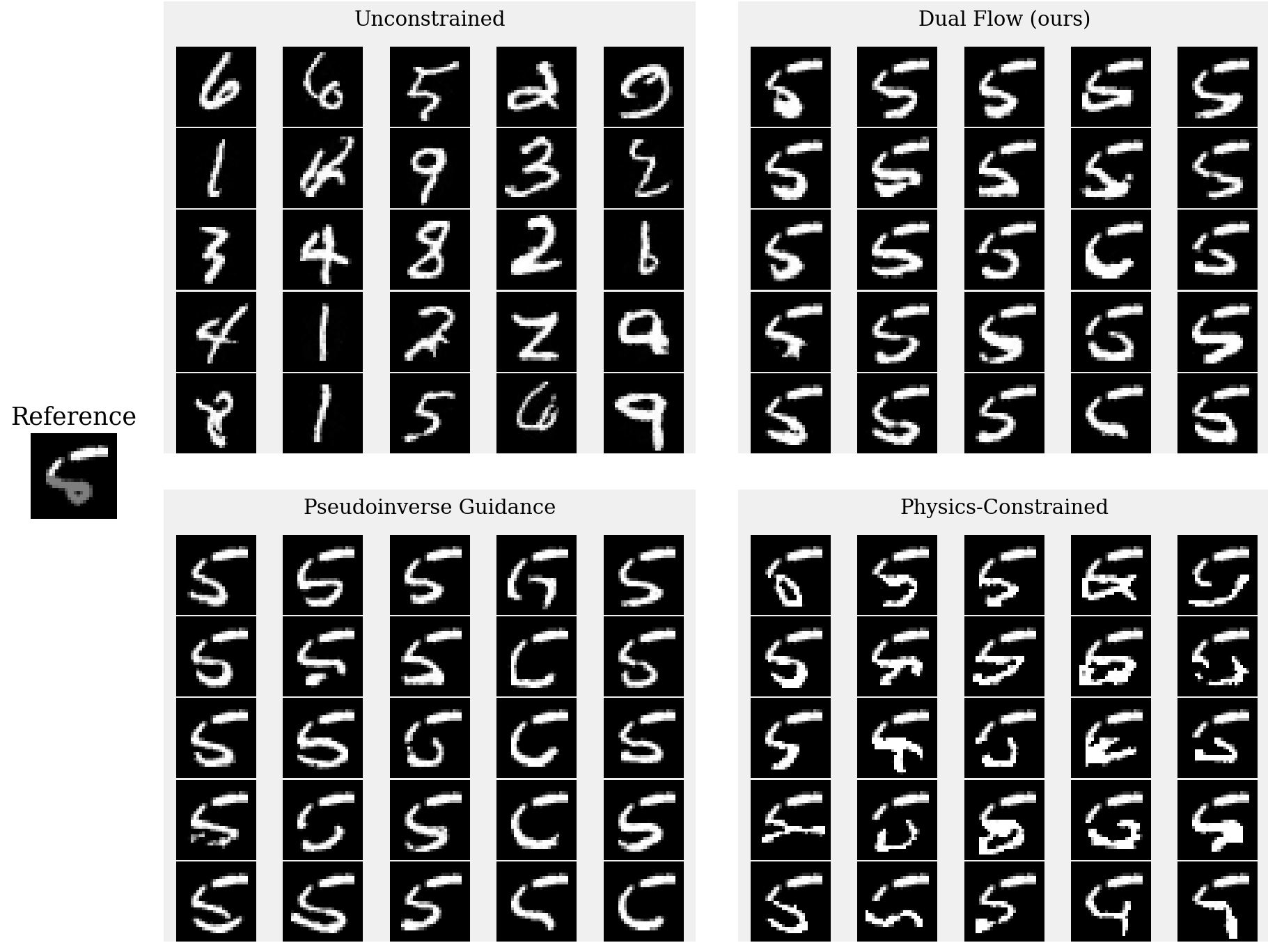}
    \caption{A pre-trained model generates MNIST handwritten digits. We then apply pseudoinverse guidance \cite{pokle2024training}, physics-constrained flow matching \cite{utkarsh2025pcfm}, and our approach to generate new samples where the top half of the image is constrained to match the reference.}
    \label{fig:constrained_mnist}
\end{figure}

Image inpainting, where only a certain subset of pixels are generated by a pre-trained model, is a special case of equality constraints $g(x) = 0$. In this case, $g(x)$ is a linear constraint specifying the value of fixed pixels. We train a $\sim$6M parameter UNet to generate MNIST digits, then apply an inpainting constraint, which fixes the top half of the image. Figure~\ref{fig:constrained_mnist} shows the resulting images, with timing statistics summarized in Table~\ref{tab:generation_times}.

Both pseudoinverse guidance \cite{pokle2024training} and our approach produce visually plausible images. Interestingly, while physics-constrained flow matching \cite{utkarsh2025pcfm} produces the tightest constraint satisfaction (Table~\ref{tab:generation_times}), clear artifacts are visible in the resulting images. This suggests that the projection substeps used in PCFM may be overly aggressive, pushing samples off of the data manifold in an attempt to enforce constraint satisfaction.

\section{Discussion and Limitations}\label{sec:disucssion}

\begin{wrapfigure}{r}{0.5\linewidth}
    \centering
    \includegraphics[width=\linewidth]{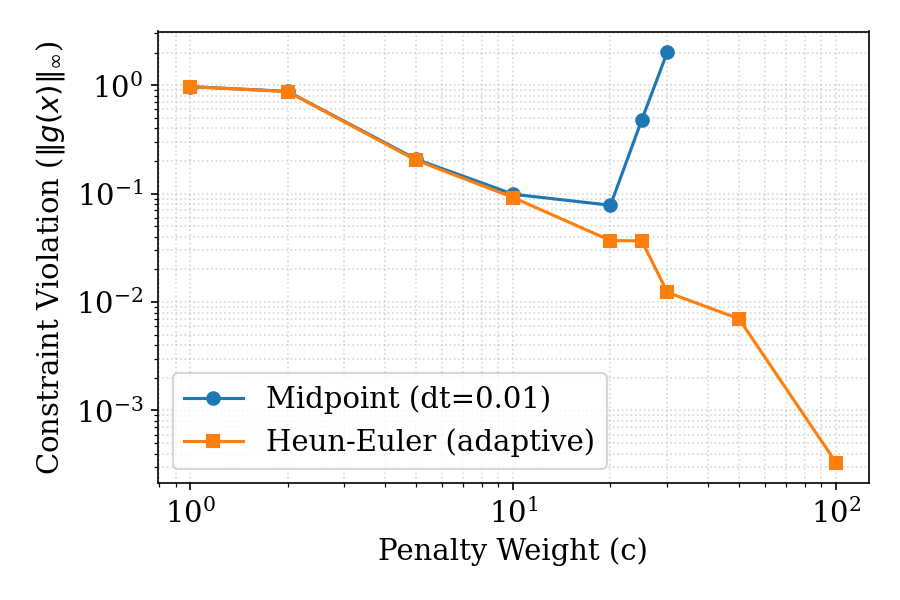}
    \caption{For MNIST image inpainting, constraint tightness saturates around $c=20$ with a fixed-step midpoint integrator. Error-controlled integration (in this case adaptive-step Heun-Euler) \cite{hairer1993solving} allows us to further tighten constraint satisfaction. }
    \vspace{-2em}
    \label{fig:mnist_violation}
\end{wrapfigure}

Lagrangian dual flows provide a simple, computationally efficient means to apply inference-time constraints to pre-trained flow matching policies. By flowing a dual co-state along with the samples, we can guarantee constraint satisfaction without resorting to expensive projections, pseudoinverses, or optimization subproblems. In fact, with careful implementation, a single vector-Jacobian product is all we need.

This method is designed for nonlinear equality and inequality constraints: we do not require linearity or even invertibility of the constraints. This makes LDF a promising candidate for emerging flow-matching applications like robotics, planning, and control---where constraints are critical but complex, and inference-time compute is limited.

Future work will focus on applying LDF to these domains. To this end, several challenges remain. Most importantly, LDF introduces a tradeoff between ODE stiffness and tight constraint satisfaction. Higher penalty values $c$ produce tighter constraint satisfaction (Fig.~\ref{fig:violation_vs_penalty}), but at the cost of a stiffer ODE that may require more denoising steps to produce an acceptable result. 

Additionally, LDF is concerned only with constraint satisfaction, not with generation from a new distribution that is conditioned on the constraints. As a result, constrained samples can cluster around the constraint boundary, as shown in Fig.~\ref{fig:inequality_star}. This is a limitation shared with projection and constrained-optimization-based methods like PCFM \cite{utkarsh2025pcfm}. Nonetheless, the inpainting results in Fig.~\ref{fig:constrained_mnist} suggest that LDF-constrained samples are able to remain close to the data manifold, at least in certain cases. 

Finally, this work opens an important bridge between flow matching and primal-dual methods in constrained optimization. Primal-dual methods enjoy a rich literature and support mature numerical implementations. Further leveraging these connections for improved inference-time-constrained flow matching remains an important and fruitful area for future research.

\section{Conclusion}\label{sec:conclusion}

In this work, we introduced Lagrangian Dual Flows, a family of methods for enforcing inference-time constraints on pretrained flow matching models. By augmenting the generative ODE with a dual co-state (and slack variable for inequality constraints), LDF drives constraint violation to zero while avoiding pseudoinverses, projection steps, or optimization sub-problems. Sampling only requires the off-the-shelf ODE integrator already used for unconstrained generation. We proved that the constraint violation decays like $O\big((1-t)^\alpha\big)$ as $t \to 1^-$ for both equality and inequality constraints under regularity, constraint-qualification, and small-variation conditions which are automatically satisfied by affine constraints. Our experiments on nonlinear equality constraints, inequality constraints, and image inpainting tasks demonstrate that LDF attains constraint satisfaction at a fraction of the computational cost of optimization and pseudoinverse-based baselines.

More broadly, LDF draws a direct connection between flow matching and the classical theory of primal-dual and saddle point dynamics. We believe that the mature literature on continuous-time optimization offers tools that may translate into faster and tighter constrained generation. Pursuing this connection and moving from constraint feasibility to constraint-conditioned generation remains an exciting avenue for future work.

\bibliographystyle{unsrt}
\bibliography{references}

\appendix

\section{Implementation Details}\label{app:implementation}

Our implementation of LDF, along with baselines and code for reproducing all figures in this paper, is available at
\url{github.com/vincekurtz/constrained_flow_matching}. We use JAX and Flax NNX for model training and deployment; ODEs are solved with Diffrax \cite{kidger2021on}. 

\subsection{Constrained Generation Methods}

\subsubsection{Lagrangian Dual Flows}

We implement Algorithms \ref{alg:equality_constrained} and \ref{alg:inequality_constrained} in JAX, leaving the penalty $c$ and dual flow exponent $p$ as free parameters. The correction terms in \eqref{eq:x} are implemented using a single VJP: the explicit Jacobian $J_g(x)$ is never formed or stored in memory. 

We wrap the dynamics of the sample $x$, Lagrange multipliers $\lambda$, and any slack variables $s$ into a single ODE, and solve this ODE with Diffrax. For fixed-step integration, we use the second-order midpoint method. For error-controlled integration, we use Diffrax's default step-size controller with a minimum step size of $10^{-5}$.

\subsubsection{Penalty Only Baseline}

The penalty-only baseline (Fig.~\ref{fig:steps_vs_violation}) exercises the same code path as our proposed approach, but we fix $\lambda = 0$. The net ODE is therefore 
\begin{equation}
    \dot{x}_t = v_\theta(x_t, t) - c\, \nabla_x \|g(x_t)\|^2,
\end{equation}
a simple quadratic penalty guiding the generation process. 

\subsubsection{Pseudoinverse Guidance Baseline}

For pseudoinverse guidance, we use JAX and Diffrax to flow samples according to the ODE defined in \cite[Algorithm 1]{pokle2024training}.
Note that this algorithm is designed for linearly constrained problems of the form 
\begin{equation}
    A x = y.
\end{equation}
To support generic nonlinear constraints $g(x) = 0$, we simply replace $A$ with the Jacobian $\jac{g}(x_t)$ and the residual $A x - y$ with $g(x)$.

To maximize runtime performance, we use LU decomposition and a linear solve to avoid full computation of the Moore-Penrose pseudoinverse, and a single VJP to avoid storing the full Jacobian of last-state predictions with respect to the current estimate.

\subsubsection{Physics-Constrained Flow Matching Baseline}

We similarly implement \cite[Algorithm 1]{utkarsh2025pcfm} using JAX and Diffrax. This PCFM algorithm is considerably more complex, involving several interleaved steps of ODE solving, projection, and optimization subproblems. We solve the optimization subproblem \cite[Algorithm 1, line 8]{utkarsh2025pcfm} with 10 steps of gradient descent at learning rate 0.1. We found that PCFM's generation paths were quite sensitive to the projection step \cite[Algorithm 1, line 6]{utkarsh2025pcfm}. In particular, performing this projection to a linearization of the constraint manifold resulted in extremely noisy generation paths for the unit circle constraint, as shown in Fig.~\ref{fig:pcfm_projection_iters} below. 

\begin{figure}
    \centering
    \includegraphics[width=0.7\linewidth]{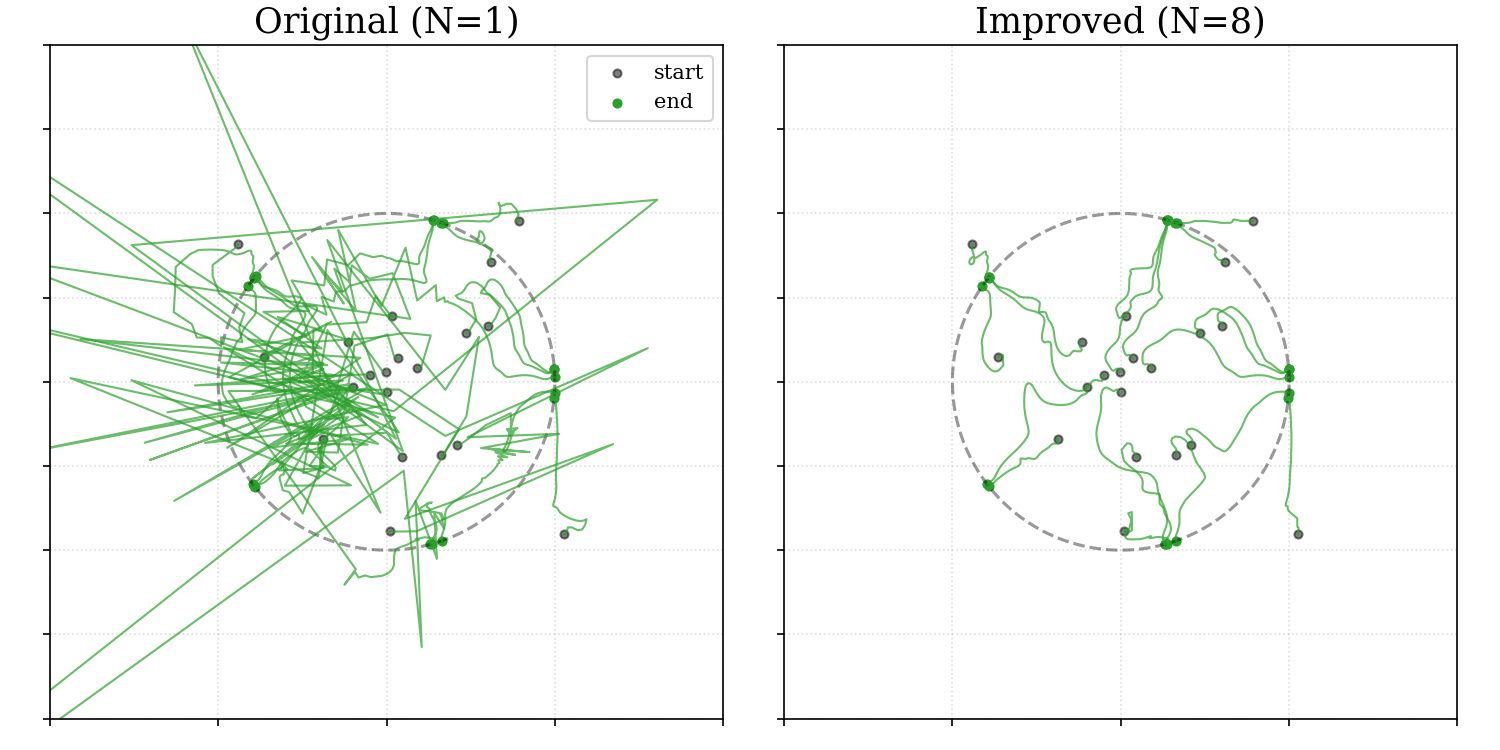}
    \caption{The original PCFM algorithm (left, \cite[Algorithm 1]{utkarsh2025pcfm}) produces extremely noisy generation paths for our star examples, subject to a unit circle constraint. We compare with an improved baseline (right), where increasing the number of projection steps avoids overshoot and results in more reasonable generation paths.}
    \label{fig:pcfm_projection_iters}
\end{figure}

To avoid this issue, our implementation repeats this projection step $N$ times, re-linearizing the constraint after each projection. We found that $N=8$ offered a reasonable balance of performance and computation speed for our examples. We use the improved $N=8$ version as our baseline for all of our numerical experiments.

\subsection{Models}

Here we provide a brief summary regarding our pretrained flow models: full details are available at \url{github.com/vincekurtz/constrained_flow_matching}. Note that these models and architectures are not particularly optimized or finely tuned, rather they provide a testbed for applying post-hoc constraints at inference time.

For the ``star'' example, our pretrained flow model $v_\theta(x, t)$ is a simple MLP with 4 hidden layers of 64 units and swish activations. We train on 1024 data points generated on a 2D star manifold. Training this small network takes a matter of minutes on a laptop.

For MNIST inpainting, we use an encoder/decoder U-net with a (64, 128, 256) dimensional channel schedule. We use a 128-dim sinusoidal time embedding, conditioned via AdaGN \cite{dhariwal2021diffusion}. This model is trained to generate unconditional samples: classification labels are not used during training. Training this model takes about 30 minutes.

\section{Proofs}\label{app:proofs}
\newcommand{\abs}[1]{\left|#1\right|}
\newcommand{\diff}{\mathrm{d}}
\newcommand{\op}{2}

\subsection{Proof of Theorem~\ref{thm:p2} }

\begin{proof}
The proof has four steps. We first remove the singularity at $t=1$ by an
exponential time change (Step~1); construct a Lyapunov function and verify
its positive definiteness and its exact dissipation along the autonomous
limit (Step~2); decompose its derivative along the true dynamics into the
autonomous part plus two controlled corrections (Step~3); and apply the
comparison lemma to the resulting inequality, obtaining an explicit decay rate (Step~4).

Throughout we abbreviate
\begin{equation*}
A(x) := \jac{g}(x)\trasp{\jac{g}(x)} \in \real^{m\times m},
\qquad
b(x,t) := \jac{g}(x)\,v_\theta(x,t) \in \real^m.
\label{eq:Ab-def}
\end{equation*}
 
\medskip
\noindent\textbf{Step 1 (Exponential time change).}
Introduce $\tau := -\log(1-t)$, mapping $[0,1)$ onto $[0,\infty)$ with
$1-t=e^{-\tau}$ and $\diff\tau/\diff t = e^{\tau}$, and the rescaled dual
variable
\begin{equation}
\eta(\tau) := (1-t)\lambda(t) = e^{-\tau}\lambda.
\label{eq:eta-def}
\end{equation}
The chain rule applied to $g(x_t)$ along~\eqref{eq:x} gives
$\dot g = \jac{g}(x)\dot x = b(x,t) - A(x)\lambda - c\,A(x)g$. Converting to
$\tau$-time via $\diff g/\diff\tau = e^{-\tau}\dot g$ and $\lambda = e^{\tau}\eta$,
\begin{equation}
\frac{\diff g}{\diff\tau}
= -A(x)\,\eta + e^{-\tau}\big[b(x,t) - c\,A(x)\,g\big].
\label{eq:gdot-tau}
\end{equation}
For the dual variable, $\dot\lambda = g/(1-t)^2$ gives
$\diff\lambda/\diff\tau = e^{-\tau}\dot\lambda = e^{\tau}g$, so differentiating
\eqref{eq:eta-def},
\begin{equation}
\frac{\diff\eta}{\diff\tau}
= -e^{-\tau}\lambda + e^{-\tau}\frac{\diff\lambda}{\diff\tau}
= -\eta + g.
\label{eq:etadot-tau}
\end{equation}
Under assumption~\Aref{4}, as $\tau\to\infty$ the factor $e^{-\tau}$ vanishes, and \eqref{eq:gdot-tau}-\eqref{eq:etadot-tau}
are close to the autonomous linear system $\diff g/\diff\tau = -A_*\eta$,
$\diff\eta/\diff\tau = g-\eta$, with $A(x)$ frozen at $A_*$; the deviations
$A(x)-A_*$ and the $e^{-\tau}$ forcing are treated as corrections in Step~3.
 
\medskip
\noindent\textbf{Step 2 (Lyapunov function).}
With the reference matrix $A_*\succ0$ from Assumption~\Aref{4}, define the candidate Lyapunov function
\begin{equation}
V(g,\eta) := \trasp{g}\big(A_*^{-1}+\tfrac{1}{2} I_m\big)g
- \trasp{g}\eta + \tfrac{1}{2}\norm{\eta}^2 + \tfrac{1}{2}\trasp{\eta}A_*\eta.
\label{eq:V-def}
\end{equation}
 
First, we establish that this candidate is positive definite.
Stacking $w:=(g,\eta)\in\R^{2m}$, we have $V=\frac{1}{2}\trasp{w}Pw$ with
\begin{equation}
P = \begin{pmatrix} 2A_*^{-1}+I_m & -I_m \\ -I_m & I_m+A_* \end{pmatrix}.
\label{eq:P-def}
\end{equation}
Diagonalizing
$A_*=Q\,\mathrm{diag}(\mu_1,\dots,\mu_m)\,\trasp Q$ with $Q$ orthogonal and
conjugating $P$ by $\mathrm{diag}(Q,Q)$ block-diagonalizes $P$ into $m$
blocks
\begin{equation}
P_i = \begin{pmatrix} 2\mu_i^{-1}+1 & -1 \\ -1 & 1+\mu_i \end{pmatrix}.
\label{eq:Pi-def}
\end{equation}
Each $P_i\succ 0$ since it is strictly diagonally dominant. Hence $P\succ 0$, so $V$ is positive definite and
\begin{equation}
c_1\big(\norm{g}^2+\norm{\eta}^2\big) \leq V(g,\eta) \leq c_2\big(\norm{g}^2+\norm{\eta}^2\big),
\qquad c_1:=\tfrac{1}{2}\lambda_{\min}(P),\ c_2:=\tfrac{1}{2}\lambda_{\max}(P).
\label{eq:V-bounds}
\end{equation}
 
\emph{Dissipation along the autonomous limit.}
Along $\diff g/\diff\tau=-A_*\eta$, $\diff\eta/\diff\tau=g-\eta$ we claim
\begin{equation}
\frac{\diff V}{\diff\tau}\Big|_{\mathrm{autonomous}} = -\norm{g}^2-\norm{\eta}^2.
\label{eq:auto-dissip}
\end{equation}
Differentiating \eqref{eq:V-def} term by term:
\begin{align*}
\frac{\diff}{\diff\tau}\!\Big[\trasp{g}(A_*^{-1}+\tfrac{1}{2} I)g\Big]
&= 2\trasp{g}(A_*^{-1}+\tfrac{1}{2} I)(-A_*\eta) = -2\trasp{g}\eta-\trasp{g}A_*\eta,\\
\frac{\diff}{\diff\tau}\!\Big[-\trasp{g}\eta\Big]
&= -(-A_*\eta)^{\!\top}\eta - \trasp{g}(g-\eta) = \trasp{\eta}A_*\eta-\norm{g}^2+\trasp{g}\eta,\\
\frac{\diff}{\diff\tau}\!\Big[\tfrac{1}{2}\norm{\eta}^2\Big]
&= \trasp{\eta}(g-\eta) = \trasp{g}\eta-\norm{\eta}^2,\\
\frac{\diff}{\diff\tau}\!\Big[\tfrac{1}{2}\trasp{\eta}A_*\eta\Big]
&= \trasp{\eta}A_*(g-\eta) = \trasp{g}A_*\eta-\trasp{\eta}A_*\eta,
\end{align*}
using the symmetry of $A_*$ for $(A_*\eta)^{\!\top}\eta=\trasp\eta A_*\eta$
and $\trasp\eta A_* g=\trasp g A_*\eta$. Summing, we are left with $-\norm{g}^2-\norm{\eta}^2$,
which is \eqref{eq:auto-dissip}.
 
\medskip
\noindent\textbf{Step 3 (Decomposition along the true dynamics).}
Now differentiate $V$ along \eqref{eq:gdot-tau}-\eqref{eq:etadot-tau},
where $A(x)$ replaces $A_*$ and the forcing $e^{-\tau}[b-cAg]$ is present.
With $\nabla_g V = 2(A_*^{-1}+\frac{1}{2} I)g-\eta$ and $\nabla_\eta V = -g+\eta+A_*\eta$,
substituting and decomposing $A(x)=A_*+(A(x)-A_*)$ gives
\begin{equation}
\frac{\diff V}{\diff\tau}
= -\norm{g}^2-\norm{\eta}^2 + R_A + e^{-\tau}R_p,
\label{eq:dV-decomp}
\end{equation}
where the autonomous part is exactly \eqref{eq:auto-dissip}, and
\begin{align}
R_A &:= -2\trasp{g}\Big(A_*^{-1}+\tfrac{1}{2} I\Big)\Big(A(x)-A_*\Big)\eta
+ \trasp{\eta}\Big(A(x)-A_*\Big)\eta,
\label{eq:RA-def}\\
R_p &:= \Big(2\trasp{g}\Big(A_*^{-1}+\tfrac{1}{2} I\Big)-\trasp{\eta}\Big)\xi,
\qquad \xi := b(x,t)-c\,A(x)\,g.
\label{eq:Rp-def}
\end{align}
 
\emph{Bound on $R_A$.}
By Cauchy--Schwarz on each term of \eqref{eq:RA-def} and Young's inequality
$2\norm{g}\norm{\eta}\leq\norm{g}^2+\norm{\eta}^2$,
\begin{align*}
\abs{R_A}
&\leq 2\norm{g}\,\norm{A_*^{-1}+\tfrac{1}{2} I}_2\norm{A(x)-A_*}_2\norm{\eta}
+ \norm{A(x)-A_*}_2\norm{\eta}^2\\
&\leq \norm{A(x)-A_*}_2\Big[\norm{A_*^{-1}+\tfrac{1}{2} I}_2\Big(\norm{g}^2+\norm{\eta}^2\Big)+\norm{\eta}^2\Big]\\
&\leq \Big(\norm{A_*^{-1}+\tfrac{1}{2} I}_2+1\Big)\norm{A(x)-A_*}_2\Big(\norm{g}^2+\norm{\eta}^2\Big)
\\ &= K_{A_*}\norm{A(x)-A_*}_2\Big(\norm{g}^2+\norm{\eta}^2\Big),
\end{align*}
using $\norm{\eta}^2\leq \norm{g}^2+\norm{\eta}^2$. By Assumption~\Aref{4},
$K_{A_*}\norm{A(x)-A_*}_2\leq\rho$, so
\begin{equation}
\abs{R_A} \leq \rho\Big(\norm{g}^2+\norm{\eta}^2\Big).
\label{eq:RA-final}
\end{equation}
 
\emph{Bound on $R_p$.}
The vector $\xi$ is bounded: by \Aref{1}-\Aref{3},
$\norm{\xi}\leq\norm{b}+c\norm{A}_2\norm{g}\leq LB+c\,a_{\max}G=:M_\xi$.
Then by Cauchy-Schwarz,
\begin{equation}
\abs{R_p}
\leq 2\norm{g}\,\norm{A_*^{-1}+\tfrac{1}{2} I}_2\norm{\xi}+\norm{\eta}\norm{\xi}
\leq K_p\Big(\norm{g}+\norm{\eta}\Big),
\label{eq:Rp-final}
\end{equation}
with $K_p:=M_\xi\max\{2\norm{A_*^{-1}+\tfrac{1}{2} I}_2,1\}$ depending only on
$L,B,c,a_{\max},A_*,G$.
 
\emph{Combined inequality.}
Substituting \eqref{eq:RA-final} and \eqref{eq:Rp-final} into
\eqref{eq:dV-decomp},
\begin{equation}
\frac{\diff V}{\diff\tau}
\leq -(1-\rho)\Big(\norm{g}^2+\norm{\eta}^2\Big)
+ e^{-\tau}K_p\Big(\norm{g}+\norm{\eta}\Big).
\label{eq:dissip}
\end{equation}

\noindent\textbf{Step 4 (Comparison lemma and explicit rate).}
Write $\norm w^2 := \norm g^2 + \norm\eta^2$. Since $\norm g + \norm\eta \leq \sqrt{2}\,\norm w$, the combined inequality~\eqref{eq:dissip} gives
\begin{equation*}
\frac{\diff V}{\diff\tau} \leq -(1-\rho)\,\norm w^2 + e^{-\tau} K \norm w,
\qquad K := \sqrt{2}\,K_p .
\end{equation*}

Set $y := \sqrt V \ge 0$. Where $V > 0$, using $\norm w^2 \ge V/c_2 = y^2/c_2$ on the dissipation term and $\norm w \leq \sqrt{V/c_1} = y/\sqrt{c_1}$ on the forcing term (both from~\eqref{eq:V-bounds}),
\begin{equation*}
\frac{\diff y}{\diff\tau} = \frac{1}{2\sqrt V}\frac{\diff V}{\diff\tau}
\leq -\alpha y + \gamma e^{-\tau},
\qquad \alpha := \frac{1-\rho}{2c_2} > 0, \quad \gamma := \frac{K}{2\sqrt{c_1}} > 0 .
\end{equation*}
The comparison lemma~\cite[Lemma~3.4]{khalil2002nonlinear} then gives, for all $\tau \geq 0$,
\begin{equation}
y(\tau) \leq y(0) e^{-\alpha\tau} + \gamma\int_0^\tau e^{-\alpha(\tau-u)} e^{-u}\,\diff u
= y(0) e^{-\alpha\tau} + \gamma
\begin{cases}
\dfrac{e^{-\alpha\tau} - e^{-\tau}}{1-\alpha}, & \text{if }\alpha \neq 1,\\[1.5ex]
\tau\,e^{-\tau}, & \text{if }\alpha = 1.
\end{cases}
\label{eq:comparison-eq}
\end{equation}
Both terms on the right of~\eqref{eq:comparison-eq} decay so we can conclude that $y(\tau) \to 0$ as $\tau \to \infty$. 

Additionally, for all $i \in \until{m}$, $\lambda_{\max}(P)\geq P_{i}^{11}=2\mu_i^{-1}+1>1$ for a symmetric matrix, and
$2c_2=\lambda_{\max}(P)$ with $\rho\ge0$, we have
$\alpha=(1-\rho)/\lambda_{\max}(P)<1$ and thus there is a constant $M > 0$ such that $y(\tau) \leq M\, e^{-\alpha \tau}$ for all $\tau \geq 0$. 
Converting to original time via $e^{-\tau} = 1-t$ and using $\norm{g(x_t)} \leq \norm w \leq y/\sqrt{c_1}$,
\begin{equation*}
\norm{g(x_t)} \leq \frac{M}{\sqrt{c_1}}\,(1-t)^\alpha = O\big((1-t)^\alpha\big)
\quad\text{and}\quad
(1-t)\norm{\lambda_t} = \norm\eta = O\big((1-t)^\alpha\big)
\end{equation*}
as $t \to 1^-$. In particular both tend to zero.
\end{proof}

\subsection{Proof of Lemma~\ref{lem:posinv} and Theorem~\ref{thm:ineq}}

\begin{proof}[Proof of Lemma~\ref{lem:posinv}] If the $i$-th component of $s_t$, $s_t^i$, is equal to zero at some time $t \in [0,1)$, then from~\eqref{eq:ineq-s}, $\dot{s}_t^i = c\; \relu(-h^i(x_t) - \lambda_t^i/c) \geq 0$. Hence $s_t^i$ cannot decrease past zero and with $s_0 \geq 0$, we conclude that $s_t \geq 0$ for all $t \in [0,1)$.    
\end{proof}

For self-containedness, we reproduce the assumptions for convergence here, stated in terms of $h$. 
We assume the following hold along the trajectory of
\eqref{eq:ineq-x}-\eqref{eq:ineq-lambda}.
\begin{itemize}
\item[(B1)]\phantomsection\label{assm:B1} \emph{Smoothness:} $h\in C^2(\R^n;\R^k)$ and $v_\theta\in C^1(\R^n \times [0,1); \R^n)$,
with $\norm{\jac{h}(x)}_2\leq L$ and $\norm{v_\theta(x,t)}\leq B$;
\item[(B2)]\phantomsection\label{assm:B2} \emph{Constraint qualification:} $a_{\min}I_k\preceq \jac{h}(x)\jac{h}(x)^\top \preceq a_{\max}I_k$
for constants $0<a_{\min}\leq a_{\max}<\infty$.
\item[(B3)]\phantomsection\label{assm:B3} \emph{Bounded trajectory:} the trajectory $(x,s,\lambda)$
exists on $[0,1)$ with $x_t$ and $s_t$ in a compact set. 
\end{itemize}

These assumptions are identical to those assumed in the equality-constrained case with the additional assumption that the slack variable $s_t$ evolves in a compact set. This additional assumption can be enforced by replacing the $\relu(-h(x_t)-\lambda_t/c)$ in~\eqref{eq:ineq-s} with $\min\{\relu(-h(x_t)-\lambda_t/c), R\}$ for some $R > 0$. This enforces boundedness at the expense of enforcing the two-sided constraint $-R \leq h(x_1) \leq 0$ for the generated sample where we understand $-R \leq h(x_1)$ to mean that each entry of $h(x_1)$ is greater than or equal to $-R$.

Unlike the equality case, the soft projection switches between active sets, so the proof rests on a Lyapunov function that dissipates simultaneously in every regime. Because the dissipation margin $\kappa_0$ it produces enters the small-variation condition, we state it first.

First, for notation, for a subset $\mathcal{S} \subseteq \until{k}$, let $E_{\mathcal{S}}:=\mathrm{diag}(\mathbf 1_{i\in\mathcal{S}})$
be the corresponding $0/1$ diagonal matrix.

\begin{lemma}\label{lem:clf}
Let $A_*\succ0$ and define
\begin{equation}
V(\tilde g,\eta) := \tfrac{1}{2}\trasp{(\tilde g,\eta)}\,P\,(\tilde g,\eta),
\qquad
P:=\begin{pmatrix} I_k & -\tfrac{1}{2} I_k\\[2pt] -\tfrac{1}{2} I_k & A_*+\tfrac{1}{2} I_k\end{pmatrix}.
\label{eq:P}
\end{equation}
Then $P\succ0$, and for \emph{every} projector $E=E_{\mathcal{S}}$
($\mathcal{S}\subseteq\{1,\dots,k\}$), the matrix
$M_E:=\left(\begin{smallmatrix}0&-(A_*+E)\\ I&-I\end{smallmatrix}\right)$ satisfies
\begin{equation}
\trasp{M_E}P+P M_E
= \begin{pmatrix} -I & I-E\\ I-E & E-A_*-I\end{pmatrix} =: D_E \ \prec\ 0 .
\label{eq:dissip-block}
\end{equation}
Consequently the dissipation margin
\begin{equation}
\kappa_0 := \kappa_0(A_*) := \frac{1}{2}\min_{\mathcal{S}\subseteq\{1,\dots,k\}}\lambda_{\min}(-D_{E_{\mathcal{S}}})>0
\label{eq:kappa0}
\end{equation}
is uniform over all active sets, and $\diff V/\diff\tau\leq-\kappa_0\Big(\norm{\tilde g}^2+\norm\eta^2\Big)$
along $\diff\tilde g/\diff\tau=-(A_*+E)\eta$, $\diff\eta/\diff\tau=\tilde g-\eta$. Moreover $\nabla_{\tilde g}V=\tilde g-\tfrac{1}{2}\eta$, so $\norm{\nabla_{\tilde g}V}\leq\tfrac{\sqrt5}{2}\norm{(\tilde g,\eta)}$.
\end{lemma}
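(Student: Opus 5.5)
The plan is a direct verification by block-matrix computation, using Schur complements for each definiteness claim. Throughout, write $w := (\tilde g,\eta)\in\R^{2k}$, so that $V = \tfrac12\trasp{w}Pw$.

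\emph{Positivity of $P$.} The $(1,1)$ block of $P$ is $I_k \succ 0$. Its Schur complement is
\begin{equation*}
(A_*+\tfrac12 I_k) - \tfrac14 I_k = A_* + \tfrac14 I_k \succ 0,
\end{equation*}
so $P\succ 0$. This gives the same sandwich bounds $\tfrac12\lambda_{\min}(P)\norm{w}^2\le V\le \tfrac12\lambda_{\max}(P)\norm{w}^2$ as in the proof of Theorem~\ref{thm:p2}.

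\emph{The identity for $D_E$.} I would compute $PM_E$ blockwise. Using that $E$ and $A_*$ are symmetric,
\begin{equation*}
PM_E = \begin{pmatrix} -\tfrac12 I & \tfrac12 I - A_* - E\\ A_*+\tfrac12 I & \tfrac12(A_*+E) - A_* - \tfrac12 I\end{pmatrix}.
\end{equation*}
Adding its transpose gives the following blocks:
\begin{itemize}
\item the top-left block is $-I$;
\item the off-diagonal block is $\tfrac12 I - A_* - E + A_* + \tfrac12 I = I-E$;
\item the bottom-right block is $E - A_* - I$.
\end{itemize}
This is exactly $D_E$.

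\emph{Negative definiteness of $D_E$.} This is the only nontrivial step. I would again take a Schur complement, now of
\begin{equation*}
-D_E = \begin{pmatrix} I & -(I-E)\\ -(I-E) & A_*+I-E\end{pmatrix}.
\end{equation*}
The $(1,1)$ block is $I\succ0$. The Schur complement is $A_* + (I-E) - (I-E)^2$. Since $I-E$ is a diagonal $0/1$ matrix, it is idempotent, so $(I-E)^2 = I-E$ and the complement collapses to $A_*\succ 0$. Hence $-D_E\succ 0$ for every active set $\mathcal S$. The point of this step is that the coupling introduced by the projector cancels exactly, independently of $\mathcal S$.

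\emph{The margin $\kappa_0$.} There are only $2^k$ subsets $\mathcal S$, and each $\lambda_{\min}(-D_{E_{\mathcal S}})$ is positive. The minimum of finitely many positive numbers is positive, so $\kappa_0>0$ uniformly over active sets.

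\emph{The dissipation inequality.} Along $\diff w/\diff\tau = M_E w$ we have
\begin{equation*}
\frac{\diff V}{\diff\tau} = \tfrac12\trasp{w}\big(\trasp{M_E}P+PM_E\big)w = \tfrac12\trasp{w}D_Ew \le -\tfrac12\lambda_{\min}(-D_E)\norm{w}^2 \le -\kappa_0\norm{w}^2.
\end{equation*}

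\emph{The gradient bound.} $\nabla_{\tilde g}V$ is the first block row of $Pw$, namely $\tilde g - \tfrac12\eta$. By Cauchy--Schwarz applied to the vector of coefficients $(1,\tfrac12)$,
\begin{equation*}
\norm{\tilde g-\tfrac12\eta} \le \norm{\tilde g}+\tfrac12\norm{\eta} \le \sqrt{1+\tfrac14}\,\norm{w} = \tfrac{\sqrt5}{2}\norm{w}.
\end{equation*}
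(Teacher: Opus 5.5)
Your proof is correct and follows the paper's argument essentially step for step. It uses Schur complements for $P\succ0$ and $D_E\prec0$, with idempotence of the $0/1$ projector collapsing the complement to $A_*$ for every active set; finiteness of the active sets for $\kappa_0>0$; and Cauchy--Schwarz for the gradient bound, with the only cosmetic differences being that you write out $PM_E$ explicitly and take the Schur complement of $-D_E$ rather than of $D_E$.
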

The proof of Lemma~\ref{lem:clf} is given at the end of this subsection.

With $\kappa_0$ defined, we state the small-variation condition that replaces Assumption~\Aref{4} in the inequality setting. It is the exact analog of the equality case, with the dissipation margin $\kappa_0$ playing the role of the constant $1$ and the gradient constant $K_{A_*}=\tfrac{\sqrt5}{2}$ established in Lemma~\ref{lem:clf}:
\begin{itemize}
\item[(B4)]\phantomsection\label{assm:B4} \emph{Small variation:} there exists a reference matrix $A_*\succ0$ such that along the trajectory
\begin{equation}
\rho := \frac{\sqrt5}{2}\,\sup_t\norm{\jac h(x_t)\trasp{\jac h(x_t)} - A_*}_2 < \kappa_0(A_*).
\label{eq:ineq-smallvar}
\end{equation}
\end{itemize}
As in the equality case, \Bref{4} holds automatically with $\rho=0$ for affine constraints $h(x)=Hx+d$: then $\jac h\trasp{\jac h}=HH^\top$ is constant, and the choice $A_*=HH^\top$ gives $\rho=0<\kappa_0$.

\begin{proof}[Proof of Theorem~\ref{thm:ineq}]
The proof follows the same four steps as the equality-constrained case (Theorem~\ref{thm:p2}): an exponential time change (Step~1), a Lyapunov function with autonomous dissipation (Step~2), a decomposition of its derivative along the true dynamics (Step~3), and a comparison-lemma argument yielding an explicit decay rate (Step~4). As in the equality case, the nonlinear constraint is handled directly by working with the reference matrix $A_*$ of the small-variation condition~\eqref{eq:ineq-smallvar}, the affine case being the special case $\rho=0$ (with $A_*=\jac h\trasp{\jac h}$ constant). The one new ingredient relative to the equality case is that boundedness of the rescaled dual variable, immediate from Assumption~\Bref{3} there, must here be established en route in Step~4. Mirroring the abbreviations $A(x)=\jac g\trasp{\jac g}$, $b(x,t)=\jac g\,v_\theta$ of the equality case, we set
    \begin{equation}
A(x) := \jac{h}(x)\trasp{\jac{h}(x)},
\qquad
b(x,t) := \jac{h}(x)\,v_\theta(x,t),
\qquad
r := \relu\Big(-h(x)-\tfrac{1}{c}\lambda\Big),
\label{eq:iAbr-def}
\end{equation}
so that the slack equation~\eqref{eq:ineq-s} reads $\dot s = c(r-s)$.

Write $\tilde g(x,s) := h(x) + s$. By Lemma~\ref{lem:posinv} and $s_0 \geq 0$ we have $s_t \geq 0$ for all $t$, hence $\norm{\relu(h(x_t))} \leq \norm{\tilde g(x_t,s_t)}$ and it suffices to prove $\norm{\tilde g(x_t,s_t)} \to 0$ as $t \to 1^-$. By Assumptions~(B1)--(B3) we may set $G:=\sup_t\norm{h(x_t)}<\infty$, $S:=\sup_t\norm{s_t}<\infty$, and $\norm{b(x_t,t)} \leq LB$.

\medskip
\noindent\textbf{Step 1 (Exponential time change).}
Let $\tau:=-\log(1-t)$, so $1-t=e^{-\tau}$, and $\eta:=(1-t)\lambda=e^{-\tau}\lambda$.
Applying the chain rule to $\tilde g=h+s$, $\dot{\tilde g}=\jac h(x)\dot x+\dot s$;
substituting \eqref{eq:ineq-x}, \eqref{eq:ineq-s} and converting to
$\tau$-time via $\diff\tilde g/\diff\tau=e^{-\tau}\dot{\tilde g}$ and
$\lambda=e^\tau\eta$,
\begin{equation}
\frac{\diff\tilde g}{\diff\tau}
= -A(x)\,\eta + e^{-\tau}\Big[b(x,t) - c\,A(x)\,\tilde g\Big] + c\,e^{-\tau}\,(r-s),
\label{eq:tau}
\end{equation}
For the dual variable, $\dot\lambda=\tilde g/(1-t)^2$ gives, exactly as in
the equality case,
\begin{equation}
\frac{\diff\eta}{\diff\tau} = \tilde g - \eta,
\qquad\text{and}\qquad
\frac{\diff s}{\diff\tau} = e^{-\tau}\dot s = c\,e^{-\tau}\,(r-s).
\label{eq:ieta-s-tau}
\end{equation}

The ReLU acts componentwise; let $\mathcal{S}(t):=\setdef{i}{{-}h^i-\tfrac1c\lambda^i>0}$
be the active set (the indices where $\relu$ is identity) and $E_{\mathcal{S}}:=\mathrm{diag}(\mathbf 1_{i\in\mathcal{S}})$
the corresponding $0/1$ diagonal matrix. On active components
$r^i=-h^i-\tfrac{1}{c}\lambda^i$; substituting (using $\lambda=e^\tau\eta$,
$h+s=\tilde g$) the term $c\,e^{-\tau}r$ contributes $-\eta_i$ to
$\diff\tilde g_i/\diff\tau$ beyond $-A(x)\eta$, while inactive components
contribute $r^i=0$. Hence
\begin{equation}
\frac{\diff\tilde g}{\diff\tau} = -\Big(A(x)+E_{\mathcal{S}}\Big)\eta + e^{-\tau}\Big[b - c(A(x)+E_{\mathcal{S}})\tilde g\Big] - c\,e^{-\tau}s_{\mathcal{S}^c},
\label{eq:regime}
\end{equation}
where $\mathcal{S}^c$ is the complement of $\mathcal{S}$ and $s_{\mathcal{S}^c} = E_{\mathcal{S}^c} s$. As $\tau\to\infty$ the factor $e^{-\tau}$ vanishes, and~\eqref{eq:regime} together with $\diff\eta/\diff\tau=\tilde g-\eta$ approaches, in each active-set regime $\mathcal{S}$, the system $\diff\tilde g/\diff\tau=-(A(x)+E_{\mathcal{S}})\eta$, $\diff\eta/\diff\tau=\tilde g-\eta$. Freezing $A(x)$ at a fixed reference $A_*$ renders this autonomous; Step~2 builds a Lyapunov function for the frozen system, and Step~3 treats both the $e^{-\tau}$ forcing and the deviation $A(x)-A_*$ as corrections. Because the active set $\mathcal{S}$ may change along the trajectory, this is a linear switching system and this Lyapunov function must dissipate simultaneously in every regime.

\noindent\textbf{Step 2 (Lyapunov function).}
Let $A_*\succ0$ be the reference matrix of the small-variation condition~\eqref{eq:ineq-smallvar}, and take the Lyapunov function $V$ of Lemma~\ref{lem:clf} built from $A_*$. Write $z := (\tilde g,\eta)$ so that $\norm z^2 = \norm{\tilde g}^2 + \norm\eta^2$. By Lemma~\ref{lem:clf}, $P\succ0$, so $V$ is positive definite and, with $c_1:=\tfrac{1}{2}\lambda_{\min}(P)$ and $c_2:=\tfrac{1}{2}\lambda_{\max}(P)$,
\begin{equation}
c_1\norm z^2 \leq V(\tilde g,\eta) \leq c_2\norm z^2.
\label{eq:iV-bounds}
\end{equation}

\emph{Dissipation along the autonomous limit.} Fix an active set $\mathcal{S}$. Along the autonomous system $\diff\tilde g/\diff\tau=-(A_*+E_{\mathcal{S}})\eta$, $\diff\eta/\diff\tau=\tilde g-\eta$ obtained by freezing $A(x)$ at $A_*$ and dropping the $e^{-\tau}$ forcing in~\eqref{eq:regime}, Lemma~\ref{lem:clf} gives
\begin{equation}
\frac{\diff V}{\diff\tau}\Big|_{\mathrm{auto}} = \trasp z\,P\,M_{\mathcal{S}}\,z \leq -\kappa_0\norm z^2 ,
\label{eq:iauto-dissip}
\end{equation}
with $\kappa_0=\kappa_0(A_*)$ from~\eqref{eq:kappa0}, uniform over all active sets.

\medskip
\noindent\textbf{Step 3 (Decomposition along the true dynamics).}
Fix the active set $\mathcal{S}$ and split the constraint matrix in~\eqref{eq:regime} as $A(x)=A_*+\Delta(x)$, $\Delta(x):=A(x)-A_*$. Differentiating $V$ along~\eqref{eq:regime} and the $\eta$-dynamics~\eqref{eq:ieta-s-tau} and substituting this splitting,
\begin{equation*}
\frac{\diff V}{\diff\tau}
= \underbrace{\trasp z\,P\,M_{\mathcal{S}}z}_{\leq\,-\kappa_0 \norm z^2\ \eqref{eq:iauto-dissip}}
\;\underbrace{-\,(\nabla_{\tilde g}V)^{\!\top}\Delta(x)\,\eta}_{=:\,R_\Delta}
\;+\; (\nabla_{\tilde g}V)^{\!\top}\underbrace{\left(e^{-\tau}\Big[b - c(A(x)+E_{\mathcal{S}})\tilde g\Big] - c\,e^{-\tau}s_{\mathcal{S}^c}\right)}_{\text{forcing}} ,
\end{equation*}
where $M_{\mathcal{S}}$ is the autonomous matrix built from $A_*$ and $\nabla_{\tilde g}V=\tilde g-\tfrac{1}{2}\eta$. As in the equality case, the deviation $\Delta(x)$ enters only through the constraint block, and does so identically in every regime, since it shifts each $A_*+E_{\mathcal{S}}$ by the same amount.

\emph{Bound on $R_\Delta$ (Jacobian variation).} Using $\norm{\nabla_{\tilde g}V}\leq\tfrac{\sqrt5}{2}\norm z$ from Lemma~\ref{lem:clf} and $\norm{\Delta(x)\eta}\leq\norm{\Delta(x)}_2\norm\eta\leq\norm{\Delta(x)}_2\norm z$,
\begin{equation*}
\abs{R_\Delta}\leq \tfrac{\sqrt5}{2}\,\norm{\Delta(x)}_2\norm z^2 \leq \rho\,\norm z^2 ,
\end{equation*}
where $\rho=\tfrac{\sqrt5}{2}\sup_t\norm{\Delta(x_t)}_2$ as in~\eqref{eq:ineq-smallvar}; for affine constraints $\Delta\equiv0$, so $R_\Delta=0$.

\emph{Bound on the forcing.} By (B1), $\norm b \leq LB$; by (B2), $\norm{A(x)+E_{\mathcal{S}}}_2\leq a_{\max}+1$; by (B3), $\norm{\tilde g}\leq \norm h+\norm s\leq G+S=:G'$ and $\norm s\leq S$. Hence the forcing is bounded in norm by $e^{-\tau}B_0$ with $B_0:=LB+c(a_{\max}+1)G'+cS$, and by Cauchy--Schwarz,
\begin{equation*}
(\nabla_{\tilde g}V)^{\!\top}(\text{forcing}) \leq e^{-\tau}\tfrac{\sqrt5}{2}B_0\norm z =: e^{-\tau}K\norm z ,
\qquad K=\tfrac{\sqrt5}{2}\Big(LB+c(a_{\max}+1)G'+cS\Big) .
\end{equation*}

\emph{Combined inequality.} Collecting the three terms,
\begin{equation}\label{eq:ineq-dissip}
\frac{\diff V}{\diff\tau}
\leq -(\kappa_0-\rho)\norm z^2 + e^{-\tau}K\norm z ,
\end{equation}
with $\kappa_0-\rho>0$ by the small-variation condition~\eqref{eq:ineq-smallvar}.

\medskip
\noindent\textbf{Step 4 (Comparison lemma and explicit rate).}
As in the equality case, the rescaled dual variable $\eta=(1-t)\lambda$ is not bounded a priori (\Bref{3} bounds $x_t$ and $s_t$, not $\lambda_t$). The comparison lemma produces the bound. Set $y:=\sqrt V\geq 0$. Where $V>0$, $\frac{\diff y}{\diff\tau}=\frac{1}{2\sqrt V}\frac{\diff V}{\diff\tau}$; bounding the dissipation term in~\eqref{eq:ineq-dissip} from below via $\norm z^2 \ge V/c_2 = y^2/c_2$ and the forcing term from above via $\norm z \leq \sqrt{V/c_1} = y/\sqrt{c_1}$ (both from~\eqref{eq:iV-bounds}),
\begin{equation*}
\frac{\diff y}{\diff\tau}
\leq \frac{1}{2y}\Big[{-}(\kappa_0-\rho)\frac{y^2}{c_2}\Big]
+\frac{1}{2y}\,e^{-\tau}K\frac{y}{\sqrt{c_1}} = -\alpha y + \gamma e^{-\tau},
\end{equation*}
where $\alpha := \tfrac{\kappa_0-\rho}{2c_2} > 0$ and $\gamma := \tfrac{K}{2\sqrt{c_1}} > 0$. We are left with a scalar differential inequality for $y$. The comparison lemma~\cite[Lemma~3.4]{khalil2002nonlinear} implies that for all $\tau \in [0,\infty)$
\begin{equation*}
y(\tau)\leq y(0)e^{-\alpha\tau}+\gamma\!\int_0^\tau e^{-\alpha(\tau-u)}e^{-u}\,\diff u
= y(0)e^{-\alpha\tau}+\gamma
\begin{cases}
\dfrac{e^{-\alpha\tau}-e^{-\tau}}{1-\alpha}, & \alpha\neq1,\\[1.5ex]
\tau\,e^{-\tau}, & \alpha=1.
\end{cases}
\end{equation*}

From this bound, $y$ is bounded on $[0,\infty)$, so $\norm z \le y/\sqrt{c_1}$ is bounded and in particular $(1-t)\norm{\lambda_t} = \norm\eta \le \norm z$ stays bounded as $t \to 1^-$. Moreover $y(\tau) \to 0$.
Moreover, the top-left block of $P$ in Lemma~\ref{lem:clf} is $I_k$, so
$\lambda_{\max}(P)\ge1$, and with $\kappa_0\le\tfrac{1}{2}$ and $\rho\ge0$ we have
$\alpha=(\kappa_0-\rho)/\lambda_{\max}(P)\le\kappa_0\le\tfrac{1}{2}<1$. Therefore, there exists a constant $M > 0$ such that $y(\tau) \leq M\, e^{-\alpha \tau}$ for all $\tau \geq 0$. 
Converting to original time via $e^{-\tau} = 1-t$, and using $\norm{\relu(h(x_t))} \le \norm{\tilde g} \le \norm z \le y/\sqrt{c_1}$ (the first inequality by $s_t \ge 0$, Lemma~\ref{lem:posinv}),
\begin{equation*}
\norm{\relu(h(x_t))} \le \frac{M}{\sqrt{c_1}}\,(1-t)^\alpha = O\big((1-t)^\alpha\big) \quad \text{and } \quad (1-t)\|\lambda_t\|  = \|\eta\| = O\big((1-t)^\alpha\big),
\end{equation*}
as $t \to 1^-$. In particular $\norm{\relu(h(x_t))} \to 0$, so the generated sample is feasible in the limit.
\end{proof}

We now prove Lemma~\ref{lem:clf}.
\begin{proof}[Proof of Lemma~\ref{lem:clf}]
\emph{Positive definiteness.} The top-left block of $P$ is $I\succ0$, and
its Schur complement is $(A_*+\tfrac{1}{2} I)-(\tfrac{1}{2} I)\trasp{(\tfrac{1}{2} I)}=A_*+\tfrac14 I\succ0$,
so $P\succ0$.

\emph{Dissipation.} A direct block computation gives
$\trasp{M_E}P+P M_E=\left(\begin{smallmatrix}-I & I-E\\ I-E & E-A_*-I\end{smallmatrix}\right)=:D_E$.
The top-left block is $-I\prec0$; the Schur complement of $D_E$ is
\begin{equation*}
(E-A_*-I)-(I-E)(-I)^{-1}(I-E) = (E-A_*-I)+(I-E)^2 .
\end{equation*}
Because $E$ is a $0/1$ diagonal projector, $E^2=E$, hence
$(I-E)^2=I-2E+E^2=I-E$, and the Schur complement collapses to
\begin{equation*}
(E-A_*-I)+(I-E) = -A_* \ \prec\ 0 .
\end{equation*}
Thus $D_E\prec0$ for every $E$, with no condition on $A_*$ beyond $A_*\succ0$
and no assumption relating $E$ to $A_*$. Since the active sets $\mathcal{S}$
form a finite set, $\kappa_0=\tfrac{1}{2}\min_{\mathcal{S}}\lambda_{\min}(-D_{E_{\mathcal{S}}})>0$. Along the autonomous flow, $\diff V/\diff\tau = \trasp z P M_{\mathcal{S}}z=\tfrac{1}{2}\trasp z D_{E_{\mathcal{S}}}z\leq-\tfrac{1}{2}\lambda_{\min}(-D_{E_{\mathcal{S}}})\norm z^2\leq-\kappa_0\norm z^2$, which gives the stated dissipation.

\emph{Gradient bound.} $\nabla_{\tilde g}V=\tilde g-\tfrac{1}{2}\eta$, so by Cauchy-Schwarz, $\norm{\nabla_{\tilde g}V}\leq\norm{\tilde g}+\tfrac{1}{2}\norm\eta\leq\tfrac{\sqrt5}{2}\norm{(\tilde g,\eta)}$.
\end{proof}

\section{Different Choices for $p$} \label{app:different-p}

In this section, we provide some more analysis for the equality-constrained case for $p \neq 2$. To summarize our results,
\begin{itemize}
    \item For $p \in [1,2)$, we show that the residual $\|g(x_t)\|$ converges to a limit as $t \to 1^-$, but this limit is generally nonzero;
    \item $p = 2$ is the case studied in Theorem~\ref{thm:p2}. We prove constraint satisfaction under Assumptions~\Aref{1}-\Aref{4};
    \item For $p > 2$, we do not have general convergence results though analysis of a simplified scalar setting in Appendix~\ref{app:p-greater-2} indicates that we can see constraint satisfaction in this case.
\end{itemize}

To facilitate the presentation, we first present an essential change of coordinates.
For $p \in [1,\infty)$, define
\begin{equation}\label{eq:cov}
\eta(t) := (1-t)^{p-1}\lambda(t), \qquad \tau := -\log(1-t),
\end{equation}
so $\tau \in [0, \infty)$ and $1-t = e^{-\tau}$. Note that when $p = 1$, $\eta(t) = \lambda(t)$.

Using these new coordinates, our dynamics become
\begin{align}
\frac{\diff x}{\diff\tau} &= -e^{(p-2)\tau}\,\trasp{\nabla g(x)}\,\eta + e^{-\tau}F(x, \tau), \label{eq:xtau}\\
\frac{\diff\eta}{\diff\tau} &= g(x) - (p-1)\eta, \label{eq:etatau}
\end{align}
where $F(x, \tau) := v_\theta(x, t(\tau)) - c\,\trasp{\nabla g(x)}\,g(x)$.

\subsection{$p \in [1,2)$}\label{app:p12}

The exponent $p$ enters \eqref{eq:xtau} only through the gain
$e^{(p-2)\tau}$ multiplying the dual correction, and this gain determines
the steady-state behavior of these dynamics. At $p = 2$ the gain is constant, and
\eqref{eq:xtau}-\eqref{eq:etatau} approach an autonomous system as
$\tau \to \infty$; this is the setting of Theorem~\ref{thm:p2}. For
$p > 2$ the gain grows without bound, resulting in increased numerical stiffness
near $t = 1$. 
For $p \in [1,2)$, by contrast, the gain decays
exponentially in $\tau$. The following proposition makes this
precise. 
 
\begin{proposition}
\label{prop:small-p}
Let $p \in [1,2)$ and suppose Assumptions~\Aref{1}-\Aref{3} hold along the
trajectory of \eqref{eq:ldf}. Then there exists $x_1^\star \in \mathbb{R}^n$
such that $x_t \to x_1^\star$ as $t \to 1^-$, and hence
$g(x_t) \to g_1 := g(x_1^\star)$. The constraint is satisfied in the limit if
and only if $g_1 = 0$, which the dynamics do not enforce.
\end{proposition}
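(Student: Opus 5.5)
The approach is to show that $\dot x_t$ is integrable on $[0,1)$ in original time. Then $x_t$ is Cauchy as $t\to1^-$, so it has a limit $x_1^\star$, and continuity of $g$ (Assumption~\Aref{1}) gives $g(x_t)\to g(x_1^\star)$.

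The only term in \eqref{eq:x} that is not obviously bounded is $-\trasp{\jac{g}(x_t)}\lambda_t$. The other terms are bounded by $B$ and $cLG$, where $G:=\sup_t\norm{g(x_t)}<\infty$ follows from the compactness in \Aref{3} and continuity of $g$. So the task reduces to controlling $\lambda_t$.

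\textbf{Bounding the dual variable.} Integrating \eqref{eq:lambda} with $\lambda_0=0$ gives
\[
\norm{\lambda_t}\le G\int_0^t (1-s)^{-p}\,\diff s .
\]
For $p=1$ this is $G\log\frac{1}{1-t}$. For $p\in(1,2)$ it is at most $\frac{G}{p-1}(1-t)^{1-p}$. In both cases $\norm{\lambda_t}\le C(1-t)^{1-p}\big(1+\log\tfrac{1}{1-t}\big)$, and since $p-1<1$ this bound is integrable on $[0,1)$. Consequently
\[
\int_0^1\norm{\dot x_t}\,\diff t\le \int_0^1\Big(B+L\norm{\lambda_t}+cLG\Big)\diff t<\infty .
\]
This shows $x_t$ has a limit. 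Equivalently, in the coordinates \eqref{eq:cov}, \eqref{eq:etatau} is a stable linear filter driven by the bounded input $g$ when $p>1$. Then \eqref{eq:xtau} has a right-hand side of size $O(e^{(p-2)\tau}(1+\tau))$, which is integrable in $\tau$.

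\textbf{The main obstacle: showing $g_1=0$ is not enforced.} This is the ``if and only if / do not enforce'' part of the statement, which asks for a counterexample rather than a bound. I would exhibit an explicit scalar instance for $p=1$, as the paper foreshadows. Take $n=m=1$, $g(x)=x$, $c$ arbitrary, and a constant drift $v_\theta\equiv B$. The system becomes linear: $\dot x=B-\lambda-cx$, $\dot\lambda=x/(1-t)$. I would then argue by the same integrability estimate that the total displacement produced by the correction terms on $[t_0,1)$ is bounded by a quantity that vanishes as $t_0\to1$. Hence the limit depends continuously on the data, and for $B$ large relative to the finite correction budget accumulated on $[0,1)$ the limit satisfies $x_1^\star\neq0$.

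Concretely, I would bound
\[
\abs{x_1^\star - x_0 - B}\le \int_0^1\big(\abs{\lambda_t}+c\abs{x_t}\big)\diff t ,
\]
with the right-hand side controlled by $G$. A bootstrap on $\sup\abs{x}$, or simply the choice $x_0$ large and $B$ large with the same sign, then gives $g_1\ne0$. If this bootstrap is messy, the fallback is to solve the linear ODE numerically or in closed form via the Euler-type equation obtained from eliminating $\lambda$.
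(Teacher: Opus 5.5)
Your proof that $x_1^\star$ exists is correct and is the paper's proof written in original time. The paper solves $d\eta/d\tau=g-(p-1)\eta$ for $\eta=(1-t)^{p-1}\lambda$ to get $\norm{\eta}\le G/(p-1)$ (or $G\tau$ when $p=1$). It then integrates $\norm{dx/d\tau}\le Le^{(p-2)\tau}\norm{\eta}+M_Fe^{-\tau}$. Under $dt=e^{-\tau}d\tau$ these are exactly your bounds on $\norm{\lambda_t}$ and on $\int_0^1\norm{\dot x_t}\,dt$. The paper's proof stops there. The clause that the dynamics do not enforce $g_1=0$ is supported only by the separate example of Proposition~\ref{prop:p1-tau}.

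Your sketch for that clause has a genuine gap.

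\emph{The estimate cannot exclude zero.} The bound $|x_1^\star-x_0-B|\le\int_0^1(|\lambda_t|+c|x_t|)\,dt\le(1+c)G$ is consistent with $x_1^\star=0$. Since $G=\sup_t|x_t|\ge|x_0|$, whenever $c|x_0|\ge|B|$ the right-hand side is already at least $|x_0|+|B|\ge|x_0+B|$. Even for $c=0$, $B=0$ it reads $|x_1^\star-x_0|\le G$ with $G\ge|x_0|$, which still allows $x_1^\star=0$.

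\emph{Scaling does not help.} With $\lambda_0=0$ the system is linear and homogeneous in $(x_0,B)$. So $x_1^\star$, $G$ and the correction budget all scale together, and taking $x_0$ and $B$ large changes nothing. The bootstrap on $\sup|x|$ only returns the vacuous $G\le|x_0|+|B|+(1+c)G$. Continuity of the limit in the data says nothing about non-vanishing either.

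\emph{What is missing.} You need sign information that makes the estimate strict. The paper takes $v_\theta=0$, $c=0$, $p=1$. A first-hitting-time argument shows $x$ stays positive, so $\lambda>0$ and $x$ is decreasing. In $\tau$-coordinates this gives $\eta(u)=\int_0^u x(r)\,dr<x_0u$, hence $\int_0^\infty e^{-u}\eta(u)\,du<x_0\int_0^\infty ue^{-u}\,du=x_0$, i.e.\ $x_\infty>0$.

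\emph{Repairing your example.} Your variant with $c=0$, $B\ge0$, $x_0>0$ works the same way. For $p=1$, Tonelli gives $\int_0^1\lambda_t\,dt=\int_0^1 x_u\,du$. Once positivity is established, $x_u<x_0+Bu$ gives $x_1^\star>B/2\ge0$. For $c>0$ the penalty term complicates these estimates, so it is simplest to take $c=0$ as the paper does.

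\emph{Your fallback.} Eliminating $\lambda$ gives $(1-t)\ddot x+c(1-t)\dot x+x=0$, which is not of Euler type. For $c=0$ it is a Bessel equation with fundamental solutions $\sqrt{1-t}\,J_1(2\sqrt{1-t})\to0$ and $\sqrt{1-t}\,Y_1(2\sqrt{1-t})\to-1/\pi$. That could give an honest closed-form counterexample, whereas a numerical solve is not a proof.
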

 
\begin{proof}
By Assumptions~\Aref{1} and~\Aref{3}, there are constants $G, L, B \geq 0$ with
$\|g(x_\tau)\| \leq G$, $\|\nabla g(x_\tau)\|_2 \leq L$, and
$\|F(x_\tau,\tau)\| \leq B + cLG =: M_F$ along the trajectory.
 
First we bound $\eta$. Solving the linear equation \eqref{eq:etatau} by
variation of constants, with $\eta(0) = \lambda_0 = 0$,
\begin{equation*}
    \eta(\tau) = \int_0^\tau e^{-(p-1)(\tau - u)}\, g(x(u))\, \diff u,
\end{equation*}
so for $p \in (1,2)$ we get $\|\eta(\tau)\| \leq G/(p-1)$ for all $\tau$,
and for $p = 1$ we get $\|\eta(\tau)\| \leq G\tau$.
 
Substituting these bounds into \eqref{eq:xtau},
\begin{equation*}
    \left\| \frac{\diff x}{\diff \tau} \right\|
    \leq L\, e^{(p-2)\tau} \|\eta(\tau)\| + M_F\, e^{-\tau}
    =: \varphi(\tau),
\end{equation*}
and since $p - 2 < 0$, the function $\varphi$ is integrable on
$[0,\infty)$ in both cases. 

Since $\diff x/\diff\tau$ is bounded and integrable on $[0,\infty)$, the following integral is absolutely convergent which lets us define the limit
\begin{equation}
x_1^\star := x_0 + \int_0^\infty \frac{\diff x}{\diff \tau} \diff\tau.
\end{equation}
We can then see that $\|x(\tau) - x_1^\star\| \leq \int_\tau^\infty \varphi(u) \diff u \to 0$ as $\tau \to \infty$. Therefore we conclude that $x(\tau) \to x_1^\star$ as $\tau \to \infty$ (or equivalently that $x_t \to x_1^\star$ as $t \to 1^-$) and that $g(x_t) \to g(x_1^\star)$ as $t \to 1^{-}$ by continuity. 
\end{proof}

We exhibit a simple instance in which the failure described by
Proposition~\ref{prop:small-p} can be established by a direct argument in
the $\tau$-coordinates. Let $n = m = 1$ with $g(x) = x$, and take
$v_\theta = 0$ and $c = 0$: the flow has no drift and no penalty, so
the dual variable is the only mechanism acting on the sample, and its only
task is to bring $x$ to the constraint set $\{x = 0\}$. With $p = 1$ we
have $\eta = \lambda$, and the general dynamics
\eqref{eq:xtau}-\eqref{eq:etatau} become the simple
system
\begin{equation}
    \frac{\diff x}{\diff \tau} = -e^{-\tau}\,\eta,
    \qquad
    \frac{\diff \eta}{\diff \tau} = x,
    \qquad
    x(0) = x_0, \quad \eta(0) = 0,
    \label{eq:p1-tau}
\end{equation}
where $\tau = \log\frac{1}{1-t} \in [0,\infty)$ and constraint satisfaction
as $t \to 1^-$ corresponds to $x(\tau) \to 0$ as $\tau \to \infty$.
 
Integrating the two equations of \eqref{eq:p1-tau} from $0$ gives the two
identities
\begin{equation}
    \eta(\tau) = \int_0^\tau x(u)\,\diff u,
    \qquad
    x(\tau) = x_0 - \int_0^\tau e^{-u}\,\eta(u)\,\diff u.
    \label{eq:p1-integrals}
\end{equation}
 
\begin{proposition}
\label{prop:p1-tau}
Let $x_0 > 0$. Along \eqref{eq:p1-tau}, $x(\tau) > 0$ and is strictly
decreasing for all $\tau > 0$, and
\begin{equation*}
    x_\infty := \lim_{\tau\to\infty} x(\tau)
    \quad\text{exists with}\quad
    0 < x_\infty \leq \tfrac{1}{2}x_0 .
\end{equation*}
Hence
$x_\infty \neq 0$ whenever $x_0 \neq 0$, and the constraint is not
satisfied in the limit.
\end{proposition}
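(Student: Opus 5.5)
The plan is to work entirely with the integral identities \eqref{eq:p1-integrals}. I will establish three things in order: positivity and strict monotonicity on a maximal interval, existence of the limit together with the upper bound $x_\infty \le \tfrac12 x_0$, and finally a strictly positive lower bound on $x_\infty$.

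\emph{Positivity and monotonicity.} Let $T^\ast := \sup\{T>0 : x(\tau)>0 \text{ on } [0,T)\}$. Since $x(0)=x_0>0$ and $x$ is continuous, $T^\ast>0$. On $(0,T^\ast)$ we have $\eta(\tau)=\int_0^\tau x>0$, so $\diff x/\diff\tau=-e^{-\tau}\eta<0$; hence $x$ is strictly decreasing there and satisfies $x\le x_0$. This gives $\eta(u)\le x_0 u$, and substituting into the second identity of \eqref{eq:p1-integrals} yields
\begin{equation*}
x(\tau)\ \ge\ x_0-x_0\int_0^\tau u e^{-u}\,\diff u\ =\ x_0(1+\tau)e^{-\tau}\ >\ 0
\end{equation*}
for $\tau<T^\ast$. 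So $x$ cannot reach zero at any finite time, and $T^\ast=\infty$. Being monotone and positive, $x$ has a limit $x_\infty\ge 0$.

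\emph{Upper bound.} Because $x$ is decreasing, $\eta(u)=\int_0^u x\ge u\,x(u)\ge u\,x_\infty$. Letting $\tau\to\infty$ in the second identity of \eqref{eq:p1-integrals} gives
\begin{equation*}
x_\infty \le x_0 - x_\infty\int_0^\infty u e^{-u}\,\diff u = x_0 - x_\infty,
\end{equation*}
that is, $x_\infty\le \tfrac12 x_0$.

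\emph{Strict positivity of $x_\infty$.} This is the main obstacle, because the crude bound $x_0(1+\tau)e^{-\tau}$ from the first step tends to zero. I will split the integral at an arbitrary $T>0$. For $u\ge T$, monotonicity gives $\eta(u)\le \eta(T)+x(T)(u-T)$. Integrating against $e^{-u}$ over $[T,\infty)$ yields
\begin{equation*}
x_\infty \ \ge\ x(T)-\eta(T)e^{-T}-x(T)e^{-T}.
\end{equation*}
Now I insert the bounds $\eta(T)\le x_0T$ and $x(T)\ge x_0(1+T)e^{-T}$; the latter may be used because the coefficient $1-e^{-T}$ of $x(T)$ is positive. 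This gives
\begin{equation*}
x_\infty\ \ge\ x_0e^{-T}\big[(1+T)(1-e^{-T})-T\big]\ =\ x_0e^{-T}\big[1-(1+T)e^{-T}\big].
\end{equation*}
The right-hand side is strictly positive for every $T>0$, since $(1+T)e^{-T}<1$ for $T>0$. Fixing, say, $T=1$ therefore gives $x_\infty>0$.

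Together with the upper bound, $0<x_\infty\le\tfrac12 x_0$. By the symmetry $x\mapsto -x$ of \eqref{eq:p1-tau}, the same argument applies when $x_0<0$, so $x_\infty\ne 0$ whenever $x_0\ne 0$.
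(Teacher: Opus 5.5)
Your proof is correct. Its first two steps match the paper's: the maximal positivity interval with the comparison $\eta(u)\le x_0u$ against $\int_0^\infty ue^{-u}\,\mathrm{d}u=1$, and the upper bound from $\eta(u)\ge x_\infty u$.

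The routes differ only in showing $x_\infty>0$, which the paper does not treat as an obstacle. It keeps the strict inequality $x(r)<x_0$ for $r>0$, so $\eta(u)<x_0u$ for every $u>0$. Since $e^{-u}\big(x_0u-\eta(u)\big)$ is continuous and strictly positive on $(0,\infty)$, strictness survives integration over the whole half-line. This gives $x_\infty=\int_0^\infty e^{-u}\big(x_0u-\eta(u)\big)\,\mathrm{d}u>0$ in one line. Your bound $x_0(1+\tau)e^{-\tau}$ tends to zero only because it discards the slack $x_0u-\eta(u)=\int_0^u\big(x_0-x(r)\big)\,\mathrm{d}r>0$, which the paper retains.

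Your split at $T$ with the tail estimate $\eta(u)\le\eta(T)+x(T)(u-T)$ is longer, but it gives something the paper's qualitative argument does not. It yields the explicit bound $x_\infty\ge x_0e^{-T}\big[1-(1+T)e^{-T}\big]$, roughly $0.1\,x_0$ at the best $T$. For reference, the system is equivalent to $x''+x'+e^{-\tau}x=0$ (primes denoting $\tau$-derivatives). This is a Bessel-type equation whose solution gives exactly $x_\infty=J_0(2)\,x_0\approx0.224\,x_0$, with $J_0$ the Bessel function of order zero. So your lower bound and the paper's upper bound $\tfrac12x_0$ bracket the true value reasonably tightly.
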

 
\begin{proof}
We will use the following fact several times:
\begin{equation}
    \int_0^\infty u\,e^{-u}\,\diff u = 1 .
    \label{eq:gamma2}
\end{equation}
 
First we show that when $x_0 > 0$, we have that $x_\tau > 0$ for all $\tau > 0$. To this end,
let $\tau^\star := \sup\setdef{T}{ x(u) > 0 \text{ for all } u \in [0,T) }$,
which is positive since $x(0) = x_0 > 0$ and $x$ is continuous. On
$[0,\tau^\star)$ we have $x > 0$, so by the first identity
in~\eqref{eq:p1-integrals} $\eta > 0$, hence
$\diff x/\diff\tau = -e^{-\tau}\eta < 0$: the residual is strictly
decreasing, and in particular $x(u) < x_0$ for $u \in (0,\tau^\star)$.
Therefore $\eta(u) = \int_0^u x(r)\,\diff r < x_0\,u$ for every
$u \in (0,\tau^\star)$, and the second identity in~\eqref{eq:p1-integrals}
gives, for any $\tau \in (0,\tau^\star)$,
\begin{equation*}
    x(\tau)
    = x_0 - \int_0^\tau e^{-u}\eta(u)\,\diff u
    > x_0 - x_0\int_0^\tau u\,e^{-u}\,\diff u
    > x_0 - x_0\int_0^\infty u\,e^{-u}\,\diff u
    = 0 ,
\end{equation*}
using~\eqref{eq:gamma2} in the last step. Thus $x$ is bounded away from
zero on every compact subinterval and can never reach it, so
$\tau^\star = \infty$: the residual stays positive and strictly decreasing
for all $\tau$.
 
Being positive and strictly decreasing, $x(\tau)$ has a limit
$x_\infty \in [0, x_0)$. Since $x(r) < x_0$ for every $r > 0$, the strict
inequality $\eta(u) < x_0 u$ holds for all $u > 0$. Multiplying by the
positive weight $e^{-u}$ and integrating over $(0,\infty)$,
\begin{equation*}
    \int_0^\infty e^{-u}\eta(u)\,\diff u
    < x_0\int_0^\infty u\,e^{-u}\,\diff u
    = x_0 .
\end{equation*}
Letting $\tau \to \infty$ in the second identity
of~\eqref{eq:p1-integrals} therefore yields
\begin{equation*}
    x_\infty
    = x_0 - \int_0^\infty e^{-u}\eta(u)\,\diff u
    > x_0 - x_0 = 0 .
\end{equation*}
For the upper bound, $x(u) \geq x_\infty$ gives
$\eta(u) \geq x_\infty u$, hence
$\int_0^\infty e^{-u}\eta(u)\,\diff u \ge x_\infty$ by~\eqref{eq:gamma2},
so $x_\infty = x_0 - \int_0^\infty e^{-u}\eta \leq x_0 - x_\infty$, i.e.\
$x_\infty \leq \tfrac{1}{2} x_0$. The case that $x_0 < 0$ is handled by symmetry and is omitted.
\end{proof}

\subsection{$p > 2$}\label{app:p-greater-2}

To provide supporting evidence that we can see constraint satisfaction for $p > 2$, we revisit the scalar example from the previous section ($n = m = 1$, $g(x) = x$, and $v_\theta = 0$). Using the same change of coordinates $\tau = -\log(1-t)$ and $\eta = (1-t)^{p-1}\lambda$, we are left with the coupled system
\begin{equation}
    \frac{\diff x}{\diff\tau} = -e^{(p-2)\tau}\,\eta,
    \qquad
    \frac{\diff\eta}{\diff\tau} = x - (p-1)\eta,
    \qquad x(0) = x_0,\ \eta(0) = 0.
    \label{eq:pg2-sys}
\end{equation}
Differentiating the first equation and substituting the second turns
\eqref{eq:pg2-sys} into a single scalar equation for the residual,
\begin{equation}
    \frac{\diff^2 x}{\diff\tau^2}
    + \frac{\diff x}{\diff\tau}
    + e^{(p-2)\tau}\, x = 0,
    \qquad
    x(0) = x_0,\quad \frac{\diff x}{\diff\tau}(0) = 0,
    \label{eq:pg2-scalar}
\end{equation}
which is a damped oscillator with unit damping and a stiffness
$e^{(p-2)\tau}$ that grows unbounded when $p > 2$. Constraint
satisfaction as $t \to 1^-$ is the statement $x(\tau) \to 0$. Note that when $p = 2$, this is a standard damped oscillator whose solution trajectories tend to the origin exponentially quickly, agreeing with the results of Theorem~\ref{thm:p2}.

First, we will choose the substitution  $x(\tau) = e^{-\tau/2} y(\tau)$. Computing the derivatives of $x(\tau)$ and substituting them into the equation~\eqref{eq:pg2-scalar}, we get the differential equation for $y$:
\begin{equation}
    \frac{\diff^2 y}{\diff \tau^2} + \left(e^{(p-2)\tau} - \frac{1}{4}\right) y = 0.
\end{equation}

Let $Q(\tau) = e^{(p-2)\tau} - \frac{1}{4}$. Since $p > 2$, we see that $Q$ is strictly positive and non-decreasing, i.e., $Q(\tau) > 0$ and $Q'(\tau) \geq 0$. Going forward, we will use $'$ to denote a derivative with respect to $\tau$. 

Define the energy function
\begin{equation}
    V(\tau) = \frac{1}{Q(\tau)}(y')^2 + y^2.
\end{equation}
Since $Q(\tau) > 0$, $V(\tau)$ is always nonnegative and satisfies that $y^2 \leq V(\tau)$, meaning that if we can show that $V$ is bounded, we will conclude that $y$ is as well. Taking a $\tau$-derivative of $V$ yields
\begin{align*}
    \frac{\diff V}{\diff \tau} &= \frac{2y'y''Q(\tau) -(y')^2 Q'(\tau)}{Q(\tau)^2} + 2yy' = \frac{-2yy'Q(\tau)^2 - (y')^2Q'(\tau)}{Q(\tau)^2} + 2yy' \\ &= -\frac{Q'(\tau)}{Q(\tau)^2} (y')^2 \leq 0.
\end{align*}
Therefore, for all $\tau \in [0,\infty)$, we have that $V(\tau) \leq V(0)$. Since $y^2(\tau) \leq V(\tau)$, we conclude that $y$ is bounded and cannot grow to infinity. Mapping back to $x$, $|x(\tau)| = e^{-\tau/2}|y(\tau)| \to 0$ as $\tau \to \infty$. Therefore, we get constraint satisfaction in this case for any initial condition and any $p > 2$.
\end{document}